\theoremstyle{plain}
\newtheorem{theorem}{Theorem}[section]
\newtheorem{proposition}[theorem]{Proposition}
\newtheorem{lemma}[theorem]{Lemma}
\theoremstyle{definition}
\newtheorem{definition}[theorem]{Definition}
\newtheorem{remark}[theorem]{Remark}
\theoremstyle{remark}
\numberwithin{theorem}{section}
\numberwithin{equation}{section}
\numberwithin{figure}{section}
\def\R{\mathbb{R}}
\def\1{{\bf 1}}
\def\e{\mathrm{e}}
\def\d{\mathrm{d}}
\def\a{\alpha}
\def\del{\delta}
\def\l{\lambda}
\def\L{\Lambda}
\def\nab{\nabla}
\def\om{\omega}
\def\vep{\varepsilon}
\def\Up{\Upsilon}
\def\pa{\partial}
\def\B{\mathcal{B}}
\def\C{\mathcal{C}}
\def\K{\mathcal{K}}
\DeclareMathOperator{\diag}{diag}
\DeclareMathOperator{\dist}{dist}
\DeclareMathOperator{\Id}{Id}
\DeclareMathOperator{\spt}{spt}
\DeclareMathOperator{\trace}{tr}
\begin{document}

\title{On the (In)stability of the Identity Map in Optimal Transportation}

\author[Y.\ Jhaveri]{Yash Jhaveri*}
\thanks{*\,Supported in part by the ERC grant ``Regularity and Stability in Partial Differential Equations (RSPDE)''}
\address{ETH Z\"{u}rich, Department of Mathematics, R\"{a}mistrasse 101, Z\"{u}rich 8092, Switzerland}
\email{yash.jhaveri@math.ethz.ch}

\begin{abstract}
We collect some examples of optimal transports in order to explore the (in)stability of the identity map as an optimal transport.
First, we consider density and domain perturbations near regular portions of domains.
Second, we investigate density and domains deformations to non-regular parts of domains.
Here, we restrict our attention to two dimensions and focus near 90 degree corners.
\end{abstract}
\maketitle
\vspace{-0.75cm}

\section{Introduction}

The optimal transport problem for quadratic cost asks whether or not it is possible to find a map that minimizes the total cost of moving a distribution of mass $\mu$ to another $\nu$ given the cost of moving $x$ to $y$ is measured by the squared distance between $x$ and $y$; concisely written, it is
\[
\min \bigg{\{} \int |x-T(x)|^2 \, \d\mu(x) : T_{\#} \mu = \nu \bigg{\}}.
\]
Under certain conditions on $\mu$ and $\nu$, the existence of a unique ($\mu$-a.e.) minimizing map, an optimal transport, was first discovered by Brenier in \cite{B}\,---\,he characterized optimal transports as gradients of convex functions.
The regularity of optimal maps is a delicate question and is guaranteed only under natural but strong geometric conditions.
 
Let $\mu = f(x)\,\d x$ and $\nu = g(y)\,\d y$, and set $X = \{ f > 0 \}$ and $Y = \{ g > 0 \}$, which we assume to be open, bounded subsets of $\R^n$.
If $f$ and $g$ are bounded away from zero and infinity on  $X$ and $Y$ respectively and $Y$ is convex, then Caffarelli showed, in \cite{C1}, that $u$ is a strictly convex (Alexandrov) solution to the Monge-Amp\`{e}re equation
\[
\det(D^2 u) = \frac{f}{g \circ \nab u} \quad \text{in}\quad X.
\]
From here, he developed a regularity theory for mappings with convex potentials, part of which we now recall.
Under the assumption that $Y$ is convex (\cite{C1}):
\begin{itemize}
\item[-] If $\l \leq f,g \leq 1/\l$, with $\l > 0$, then $\nab u \in C_{\rm loc}^{0,\sigma}(X)$ for some $\sigma \in (0,1)$.
\item[-] If, in addition, $f \in C^{k,\alpha}_{\rm loc}(X)$ and $g \in C^{k,\alpha}_{\rm loc}(Y)$, then $\nab u \in C^{k+1,\alpha}_{\rm loc}(X)$, for $k \geq 0$ and $\alpha \in (0,1)$.
\end{itemize}
Under the assumption that both $X$ and $Y$ are convex (\cite{C2}):
\begin{itemize}
\item[-] If $\l \leq f,g \leq 1/\l$, with $\l > 0$, then $\nab u \in C^{0,\sigma}(\overline{X})$ for some $\sigma \in (0,1)$.
\end{itemize}
Under the assumption that both $X$ and $Y$ are smooth and uniformly convex (\cite{C3}):
\begin{itemize}
\item[-] If $f \in C^{k,\alpha}(\overline{X})$ and $g \in C^{k,\alpha}(\overline{Y})$, with $f,g > 0$,  then $\nab u \in C^{k+1,\alpha}(\overline{X})$, for $k \geq 0$ and $\alpha \in (0,1)$.
\end{itemize}
That said, given any set $E$, the optimal transport taking (the constant density $1$ on) $E$ to (the constant density $1$ on) $E$ is the identity map.

In general, when we say the optimal transport taking a set $X$ to a set $Y$, we mean the optimal transport taking the density $\1_X$ to the density $ \1_Y$ (necessarily, $|X| = |Y|$.)

In this paper, we study the stability of the identity map as an optimal transport from a domain to itself.
First, we consider density and domain perturbations near regular portions of domains.
More specifically, we find an example of an arbitrarily small Lipschitz (non-convex) perturbation of (a side of) a square that, when taken as the target domain in the optimal transport problem from that square, yields a discontinuous optimal transport.
Second, noticing that the discontinuity of optimal transports is an open condition, we find that given any $\vep > 0$, there exists an $\alpha > 0$ and an $\vep$-small $C^{1,\alpha}$ perturbation of a square that produces a discontinuous optimal transport.
We then show this is sharp, via an $\vep$-regularity theorem at the boundary (in $n \geq 2$ dimensions), in the sense that given any $\alpha > 0$, there exists an $\vep > 0$ such that any $\vep$-small $C^{1,\alpha}$ perturbation of a $C^{1,\alpha}$ domain has a continuous optimal transport.
Second, we investigate density and domains deformations around non-regular parts of domains.
Here, we restrict our attention to two dimensions and focus near 90 degree corners.
We observe that the $\vep$-regularity theorem we proved on domains ``comparable'' to half balls can be extended to domains ``comparable'' to quarter discs.
Finally, we show that given two smooth densities $f$ and $g$ on the unit square, the optimal transport taking $f$ to $g$ is of class $C^{2,\alpha}$ up to the boundary for every $\alpha < 1$, yet it may not be of class $C^3$, even with densities that are arbitrarily $C^\infty$-close to $1$.

\section{Perturbations in Regular Domains}

In \cite{C1}, Caffarelli showed that the optimal transport $\nab u_\vep$ taking the ball $B_1 \subset \R^2$ to the dumbbell $D_\vep := (B_{r_\vep}^{+} + \e_1) \cup (B_{r_\vep}^{-} -\e_1) \cup ([-1,1] \times ( -\vep, \vep))$, where $r_\vep > 0$ is taken so that $|D_\vep| = |B_1|$, is discontinuous for all $\vep > 0$ sufficiently small.\footnote{\,In actuality, he showed that the optimal transport from $B_1$ to a smoothing of $D_\vep$ is discontinuous.
However, the regularity of $D_\vep$ is irrelevant to the essence of the singular nature of his example.}
Here, $B_{r}^{+} := B_{r} \cap \{ x_1 > 0 \}$ and $B_{r}^{-} := B_{r} \cap \{ x_1 < 0 \}$ for $r > 0$.
His example demonstrates the importance of having a convex target in guaranteeing an optimal transport's regularity.
A natural follow-up question is, how important is the convexity of the target space in guaranteeing the regularity of the optimal transport?
We shall see that even a small deviation from convexity can break the continuity of an optimal transport.

Before presenting our examples, let us review Caffarelli's example.
Notice that the optimal transport taking $B_1$ to $D_0 := (B_1^{+} + \e_1) \cup (B_1^{-} -\e_1)$ is given by
\[
\nab u_0(x) =
\begin{cases}
x + \e_1 &\text{if } x_1 > 0\\
x - \e_1 &\text{if } x_1 < 0.
\end{cases} 
\]
By the stability of optimal transports, up to the addition of constants, the potentials $u_\vep$ converge locally uniformly (in $\R^2$) to $u_0$.
So, since $|\pa u_0(\{ (0,\pm 1) \})| = 0$, it follows that
\[
\lim_{\vep \to 0} |\pa u_\vep(\{0\} \times (-1,1))| = |\pa u_0(\{0\} \times (-1,1))| = 4. 
\]
In turn, we see that the Monge-Amp\`{e}re measure associated to $u_\vep$ must have a singular part for all $\vep > 0$ sufficiently small, that is, $\nab u_\vep$ is discontinuous for all $\vep > 0$ sufficiently small.
(For a more a hands on explication of Caffarelli's example, one that appeals to the monotonicity of optimal transports, we refer the reader to \cite{CJLPR}.) 

\subsection{Lipschitz Perturbations.}
\label{sec: Lip pert}
Here, we present an example of an $\vep$-Lipschitz perturbation of a square that after smoothing proves the following:

\begin{theorem}
\label{thm: Lip discty}
Given any $\vep > 0$, there exists a smooth, convex domain $X$ and a domain $Y$ that is an $\vep$-small Lipschitz perturbation of $X$ such that the optimal transport taking $X$ to $Y$ is discontinuous.
\end{theorem}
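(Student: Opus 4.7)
The plan is to mimic Caffarelli's dumbbell construction on a thin convex domain of height comparable to $\vep$, so that a normal boundary displacement bounded by $\vep$ in both $L^\infty$ and Lipschitz norm can close the waist of the domain entirely. The key observation is that, although the indentation needed to deform a convex slab into a dumbbell is an order-one perturbation of the shape in absolute terms, on a slab of height of order $\vep$ this indentation has both $L^\infty$ and Lipschitz size of order $\vep$.

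Fix $\vep > 0$ and constants $R, W$ independent of $\vep$ with $W > 2$ and $R$ large enough to accommodate the construction below (e.g., $W = 5$, $R = 10$). Take $X$ to be a smooth, uniformly convex domain agreeing with the thin rectangle $[-R, R] \times (-\vep, \vep)$ away from rounded endcaps at $|x_1| = R$. For a parameter $\delta \in (0, \vep/2)$, choose a continuous, piecewise-linear $\phi_\delta : [-R, R] \to \R$ with the following structure: (i) $\phi_\delta \equiv \vep - \delta$ on $[-W/2, W/2]$; (ii) $\phi_\delta \equiv -\vep$ on two symmetric outer plateaus of width $W_b$; (iii) $\phi_\delta \equiv 0$ near $|x_1| = R$; (iv) all transitions are linear and of width at least $2$, so that $\|\phi_\delta'\|_\infty \leq \vep$; and (v) $\int \phi_\delta = 0$, which fixes $W_b$ and yields $W_b \approx W/2 - 1 > 0$ for $\delta$ small. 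Set $Y_\delta$ to be the Lipschitz domain whose top boundary is the graph $\{x_2 = \vep - \phi_\delta(x_1)\}$, whose bottom boundary is $\{x_2 = -\vep + \phi_\delta(x_1)\}$, and whose endcaps coincide with those of $X$. Then $|Y_\delta| = |X|$, the boundary $\pa Y_\delta$ is an $O(\vep)$ $C^{0,1}$-perturbation of $\pa X$, and the waist of $Y_\delta$ at $x_1 = 0$ has height $2\delta$.

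As $\delta \to 0^+$, the domains $Y_\delta$ degenerate to a domain $Y_0$ whose interior is the disjoint union of two lobes separated by the pinched segment $[-W/2, W/2] \times \{0\}$. The limiting optimal transport potential $u_0$ has gradient sending $\{x_1 < 0\} \cap X$ to the left lobe and $\{x_1 > 0\} \cap X$ to the right; because these lobes are separated by a fixed horizontal distance of order $W$, the subdifferential of $u_0$ along the seam $\{x_1 = 0\} \cap X$ has positive Lebesgue measure, at least $m = m(W, \vep) > 0$, independently of $\delta$. Exactly as in Caffarelli's dumbbell argument recalled above, stability of optimal transports then yields
\[
\lim_{\delta \to 0^+} |\pa u_\delta(\{x_1 = 0\} \cap X)| = |\pa u_0(\{x_1 = 0\} \cap X)| \geq m > 0,
\]
and since $\{x_1 = 0\} \cap X$ has two-dimensional Lebesgue measure zero, this forces $\nab u_\delta$ to be discontinuous for every sufficiently small $\delta > 0$. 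Setting $Y := Y_\delta$ for any such $\delta$ proves the theorem.

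The main obstacle is that the perturbation must be simultaneously $L^\infty$-small and Lipschitz-small: a normal displacement of size $\vep$ can close a waist of height at most $2\vep$, which is why $X$ itself is taken of height comparable to $\vep$ (hence its long, thin aspect ratio). The area-balance constraint $|Y| = |X|$ then forces $\phi_\delta$ to bulge outward over two side plateaus whose widths are pinned by the zero-mean condition, and one must verify that the central pinch, the outward bulges, and their Lipschitz transitions all fit inside $[-R, R]$. This is an elementary, $\vep$-independent geometric check; once it is in place, the discontinuity of $\nab u_\delta$ is driven entirely by the same stability-of-subdifferential argument used for Caffarelli's original dumbbell.
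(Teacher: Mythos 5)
Your construction is correct and does prove the theorem as literally stated, but it takes a genuinely different (and in one respect weaker) route than the paper. Two points of comparison.

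\emph{Different route.} The paper fixes an $\vep$-independent square $X=(0,4)\times(-2,2)$ and cuts out a single thin triangle of width $\vep$ and slope $\vep$ from one side. The discontinuity is then proved directly for that fixed $\vep$, by a case analysis: either some boundary mass is sent into the interior (discontinuity), or the boundary maps to the boundary, in which case a partial Legendre transform plus a harmonic barrier forces $\partial_{11}u^\star(p,0)<0$ near the corner, contradicting convexity. Your construction instead makes the whole domain thin (height $\sim\vep$, length order one), pinches the waist into a dumbbell, and argues by letting an auxiliary parameter $\delta\to 0^+$ so that the limiting target $Y_0$ disconnects; discontinuity of $\nabla u_\delta$ for small $\delta$ then follows from the stability-of-subdifferentials argument, exactly as in Caffarelli's original example. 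Your use of the long direction to spread the Lipschitz transitions so that $\|\phi_\delta'\|_\infty\leq\vep$ is the right idea, and the zero-mean bookkeeping for $\phi_\delta$ is fine.

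\emph{What the paper's version buys.} In your construction the source domain $X$ depends on $\vep$ in an essential way: its thin dimension is $2\vep$, so the perturbation is an $O(1)$ fraction of that dimension. The theorem as written permits this, but the paper's construction shows something stronger and, I'd argue, more to the point of ``instability of the identity'': that one can fix a single, order-one domain $X$ once and for all and still break regularity with a perturbation that is $\vep$-small both absolutely and relative to every length scale of $X$. That stronger fact is needed downstream: Lemma~\ref{lem: open} and Theorem~\ref{thm: C1a discty} are meant to smooth the paper's fixed example, and Theorem~\ref{thm: C1a cty} only makes sense if ``$\vep$-small perturbation'' is measured relative to the geometry of a fixed $X$.

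\emph{Two gaps you should close.} First, the claim that $\nabla u_0$ sends $\{\pm x_1>0\}\cap X$ to the two lobes is not automatic; it follows from uniqueness and the reflection symmetry about $\{x_1=0\}$ combined with cyclical monotonicity (if a point with $x_1>0$ went to the left lobe, its mirror image would violate $(T(x)-T(Rx))\cdot(x-Rx)\geq 0$). Second, and more substantively, the assertion that $|\partial u_0(\{x_1=0\}\cap X)|\geq m>0$ needs justification: the subdifferentials $\partial u_0(0,x_2)$ are horizontal segments, and if they all sat at the same height $y_2$ they would sweep out a set of measure zero. To rule this out, restrict $u_0$ to $X\cap\{x_1>0\}$: this is the optimal transport between two convex sets with constant densities, so by Caffarelli's boundary regularity it is a homeomorphism up to the boundary (or, alternatively, $u_0$ is strictly convex along $\{x_1=0\}$), which forces the segments to lie at distinct heights and hence sweep out positive area. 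Finally, note that ``smooth, uniformly convex'' and ``agreeing with a thin rectangle away from endcaps'' are incompatible (a flat top has zero curvature); the theorem only asks for smooth and convex, so simply drop the word ``uniformly.''
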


\begin{proof}
Let 
\[
X : = (0,4) \times (-2,2) \qquad\text{and}\qquad Y : = \{ (0,4+ \vep/4) \times (-2,2) \} \setminus \overline{\Gamma}_{\vep}
\]
where $\Gamma_\vep$ is the interior of the triangle with vertices $(\vep,0)$, $(0,1)$, and $(0,-1)$.
Notice that $X$ and $Y$ have the same volume and $Y$ is an $\vep$-Lipschitz (non-convex) perturbation of $X$.
If $T = \nab u$ is the optimal transport taking $Y$ to $X$, then from \cite{C1,FL}, we have that $u$ is a strictly convex Alexandrov solution of
\[
\det(D^2 u) = 1\quad\text{in}\quad Y
\] 
and of class $C^\infty_{\rm loc}(Y) \cap C^1(\R^2)$.

Let $X'$ and $Y'$ be the reflections of $X$ and $Y$ over the lines $\{x_2=2\}$ and $\{y_2 = 2 \}$ respectively and $T'$ be the optimal transport taking $Y'$ to $X'$.
Then, the map $S(y) := RT'(Ry)$ where $R$ is the reflection over the line $\{y_2 = 2 \}$ (and also over the line $\{x_2 = 2 \}$) is a competing transport map with equal cost.
So, $S = T'$.
Moreover, $T'|_{\overline{Y}} = T$.
It follows that $T(\{y_2 = 2\}) \subset \{x_2 = 2\}$.
Similarly, considering reflections of $X$ and $Y$ over the lines $\{x_1 = 4\}$ and $\{y_1 = 4 + \vep/4\}$ and the lines $\{x_2=-2\}$ and $\{y_2 = -2\}$, reflections of $X^+ := X \cap \{ x_2 > 0 \}$ and $Y^+ := Y \cap \{ y_2 > 0 \}$ over the lines $\{x_1=0\}$ and $\{y_1=0\}$, and reflections of $X^- := X \cap \{ x_2 < 0 \}$ and $Y^- := Y \cap \{ y_2 < 0 \}$ over the lines $\{x_1=0\}$ and $\{y_1=0\}$, we deduce that $T = \nab u$ maps $\{4 +\vep/4\} \times [-2,2]$ homeomorphically to $\{4\} \times [-2,2]$, maps $[0,4+\vep/4] \times \{\pm 2\}$ homeomorphically to $[0,4] \times \{\pm 2\}$, and maps $\{0\} \times [1,2]$ and $\{0\} \times [-2,-1]$ homeomorphically to subsegments of $\{0\} \times [0,2]$ and $\{0\} \times [-2,0]$ respectively. 
Also, by symmetry and restriction, $u$ is strictly convex on $\overline{Y} \cap \{ y_2 \geq 0\}$ and $\overline{Y} \cap \{ y_2 \leq 0\}$ (see \cite{C2}).

There are two possibilities. 
Either $\nab u(\pa Y) = \pa X$ and $u$ is strictly convex on $\overline{Y}$ or some portion (symmetric with respect to the $y_1$-axis) of the left boundary of $Y$ will map inside $X$.
In particular, in the second scenario, a symmetric subset of the two segments joining $(0,1)$, $(\vep,0)$, and $(0,-1)$ and containing the point $(\vep,0)$ will map to a segment along $X \cap \{ x_2 = 0 \}$, and the optimal transport $\nab u^\ast$, where $u^\ast$ is the Legendre transform of $u$, taking $X$ to $Y$ will be discontinuous along the segment joining $(0,0)$ and $(0,t_\vep)$ where $\nab u(\vep,0) = (0,t_\vep)$.
(See Figure~\ref{fig: discty}.)

Suppose that $\nab u(\pa Y) = \pa X$.
Then, taking the partial Legendre transform of $u$ in the $\e_1$-direction and setting
$v = \pa_1 u^\star$, we find that
\[
\begin{cases}
\Delta v = 0 &\text{in } X\\
v = h(p) &\text{on } (0,4) \times \{\pm 2\}\\
v = 4 + \vep/4 &\text{on } \{4\} \times (-2,2)\\
v = \max \{ 0, -\vep|x_2|+\vep \} &\text{on }\{0\} \times (-2,2)
\end{cases}
\]
where $h$ is an increasing function such that $h(0) = 0$ and $h(4) = 4+ \vep/4$.
(See the proof of Theorem~\ref{thm: C2a up to bdry Q} for details on the partial Legendre transform.)
Consider the harmonic function
\[
b(p,x_2) := \vep\frac{2}{\pi}\Re(z \log(z)) + \vep + 2(x_2^2 - p^2) + 16p
\]
where $z = p + i x_2$ and $\Re(z \log(z))$ denotes the real part of $z \log(z)$.
Observe that $b$ is an upper barrier for $v$.
Thus, as $b(0) = v(0)$ and $b(p,0) - b(0,0) < 0$ for all $p > 0$ sufficiently small, we deduce that
\[
\int_0^{p} \pa_1 v(t,0) \, \d t < 0
\]
for all $0 < p \ll 1$.
It follows that
\[
\pa_{11}u^\star(p,0) = \pa_1 v(p,0) < 0
\]
for all sufficiently small $p > 0$.
But, this contradicts the convexity of $u^\star$ in the $\e_1$-direction.
\end{proof}

\begin{figure}[ht]
\label{fig: discty}
\begin{tikzpicture}[line cap=round,line join=round,scale=2.5,baseline={(0,0)}]
\pgfmathsetmacro{\eps}{0.2}
\path[fill=black!5](\eps,0)--(0,1/2)--(0,1)--(2+\eps/4,1)--(2+\eps/4,0)--cycle;
\draw(0,-1)--(0,-1/2)--(\eps,0)--(0,1/2)--(0,1)--(2+\eps/4,1)--(2+\eps/4,-1)--cycle;
\draw[dashed, line width=0.01pt](-\eps/2,0)--(2+\eps/4+\eps/2,0);
\draw[decorate,decoration={brace},yshift=0.5pt](0,1/2)--(\eps,1/2) node[midway,above] {$\vep$};
\draw[decorate,decoration={brace},xshift=-0.5pt](0,0)--(0,1/2) node[midway,left] {$1$};
\path[fill=black!100](\eps/2,-1/4)--(\eps/2,1/4)--(\eps,0);
\node (Y) at (1+\eps/2,-1) {};
\draw (Y) node[below] {$Y$};
\path[fill=black!5](3,0)--(3,1)--(5,1)--(5,0)--cycle;
\draw(3,-1)rectangle(5,1);
\draw[dashed, line width=0.01pt](3-\eps/2,0)--(5+\eps/2,0);
\draw[line width=1pt, line cap=butt](3,0)--(3+\eps,0);
\draw[decorate,decoration={brace,mirror},yshift=-0.5pt](3,0)--(3+\eps,0) node[midway,below] {$t_\vep$};
\node (X) at (4,-1) {};
\draw (X) node[below] {$X$};
\draw[->] (3-\eps,-1/2)--(2+\eps,-1/2)node[midway,fill=white,inner sep=1pt]{$\nab u^\ast$};
\draw[->] (2+\eps,1/2)--(3-\eps,1/2)node[midway,fill=white,inner sep=1pt]{$\nab u$};
\draw[->,line width=0.01pt] (3,0) to[bend left=20,looseness=1] (\eps/2,-1/4);
\draw[->,line width=0.01pt] (3,0) to[bend right=20,looseness=1] (\eps/2,1/4);
\draw[->] (3+\eps,0) to[bend right=30,looseness=1] (\eps,0);
\end{tikzpicture}
\caption{The optimal transport $\nab u^\ast$ splits mass.}
\end{figure}
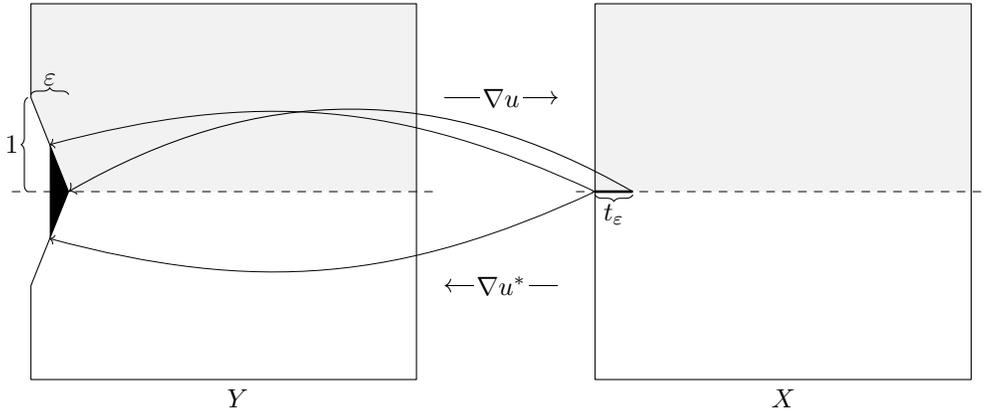

\subsection{$C^{1,\alpha}$ Perturbations.}
Notice that the proof of Caffarelli's optimal transport's discontinuity shows that the discontinuity of optimal transports is an open condition.
More precisely, by the stability of optimal transports, we have the following lemma:

\begin{lemma}
\label{lem: open}
Let $X_0,Y_0 \subset \R^n$ be open, bounded sets and $X_\vep, Y_\vep \subset \R^n$ be a sequences of open, bounded sets such that $\dist(\pa X_\vep,\pa X_0)+\dist(\pa Y_\vep,\pa Y_0) \to 0$ as $\vep \to 0$.
Let $f_\vep$ and $g_\vep$ be sequences of densities uniformly bounded away from zero and infinity on $X_\vep$ and $Y_\vep$ respectively, satisfying the mass balance condition $\|f_\vep\|_{L^1(X_\vep)} = \|g_\vep\|_{L^1(Y_\vep)}$, and such that $f_\vep \to f_0$ and $g_\vep \to g_0$ in $L^1$ as $\vep \to 0$.
If the optimal transport $\nab u_0$ taking $f_0$ to $g_0$ is discontinuous, then there exists an $\vep_0 > 0$ such that the optimal transport $\nab u_\vep$ taking $f_\vep$ to $g_\vep$ is discontinuous for all $\vep < \vep_0$.
\end{lemma}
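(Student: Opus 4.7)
The plan is to mirror Caffarelli's argument from the dumbbell example: combine the stability of Brenier potentials with the weak-$*$ continuity of Monge-Amp\`{e}re measures to detect a singular contribution that persists in the limit. By the standard stability theory of optimal transport --- using the uniform $L^\infty$ bounds on $f_\vep, g_\vep$, their $L^1$ convergence to $f_0, g_0$, and the convergence of the supports --- the potentials $u_\vep$ can be normalized so that $u_\vep \to u_0$ locally uniformly on $X_0$. This local uniform convergence of convex functions then yields weak-$*$ convergence of the associated Monge-Amp\`{e}re measures; in particular $|\pa u_\vep(A)| \to |\pa u_0(A)|$ for bounded Borel $A$ whose topological boundary is $|\pa u_0(\cdot)|$-null.

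The second step is to produce a Borel witness set $E_0 \subset X_0$ with $|E_0| = 0$ and $\eta := |\pa u_0(E_0)| > 0$, the abstract analogue of Caffarelli's segment $\{0\} \times (-1,1)$. In our setting of densities uniformly bounded above and below, this is equivalent to discontinuity of $\nab u_0$: the bounded density hypotheses rule out pathological degeneracies, so failure of continuity of $\nab u_0$ forces a Lebesgue-null ``jump set'' $\Sigma_0 \subset X_0$ along which the subdifferential sweeps out a positive-measure region (the union of the ``flat leaves'' of the Legendre conjugate $u_0^{*}$). By outer regularity of measures, $E_0$ may then be thickened to an open set $U_0 \supset E_0$ of arbitrarily small Lebesgue measure, with $|\pa u_0(\pa U_0)| = 0$ and $|\pa u_0(U_0)| \geq \eta/2$.

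The conclusion follows by contradiction. On the one hand, by the weak-$*$ stability of Monge-Amp\`{e}re measures on sets with $|\pa u_0(\cdot)|$-null boundary, $|\pa u_\vep(U_0)| \to |\pa u_0(U_0)| \geq \eta/2$, so $|\pa u_\vep(U_0)| \geq \eta/4$ for all small enough $\vep$. On the other hand, if $\nab u_\vep$ were continuous, then $u_\vep$ would be $C^1$, and the lower bound $g_\vep \geq \l > 0$ again rules out positive-measure affine regions; combining the area formula with the OT change-of-variables then yields $|\pa u_\vep(A)| \leq C|A|$ for every Borel $A$, with $C := \sup_\vep \|f_\vep\|_\infty / \inf_\vep g_\vep$. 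Applied to $A = U_0$, this gives $|\pa u_\vep(U_0)| \leq C|U_0|$; choosing $|U_0| < \eta/(4C)$ at the outset contradicts the previous lower bound and forces $\nab u_\vep$ to be discontinuous for all sufficiently small $\vep$.

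The main technical point is the construction of $E_0$ in the second step --- namely, the equivalence, in our bounded-density regime, between discontinuity of $\nab u_0$ and the presence of a nontrivial singular part of $|\pa u_0(\cdot)|$. Once this structural fact is in hand, the remainder is a direct application of the stability of optimal transports and of the weak-$*$ convergence of Monge-Amp\`{e}re measures under uniform convergence of convex potentials.
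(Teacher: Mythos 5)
The paper does not supply a detailed proof of Lemma~\ref{lem: open} --- it is stated as an immediate consequence of ``the stability of optimal transports,'' with the intended argument being precisely the one the paper runs explicitly for Caffarelli's dumbbell example a few lines earlier: normalize so that $u_\vep \to u_0$ locally uniformly, use weak-$*$ convergence of Monge--Amp\`{e}re measures on sets with $|\pa u_0(\cdot)|$-null boundary to propagate the singular part of $|\pa u_0(\cdot)|$ to $|\pa u_\vep(\cdot)|$, and read off discontinuity from the singular part. Your proposal is a faithful unwinding of exactly this scheme, so it is the same approach and is essentially correct. The one place you should be aware of a hidden step is the double use of the equivalence ``$\nab u$ discontinuous $\Leftrightarrow$ $|\pa u(\cdot)|$ has a nontrivial singular part'' under the bounded-density hypothesis: the direction you state as the ``main technical point'' (discontinuity $\Rightarrow$ singular part) rests on the partial-regularity structure theorem for Brenier solutions (Figalli--Kim, De~Philippis--Figalli), while the direction used in the contradiction (continuity $\Rightarrow$ $|\pa u_\vep(A)| \leq C|A|$) needs the observation that a continuous $\nab u_\vep$ must map $X_\vep$ into $\overline{Y_\vep}$ together with $|\pa Y_\vep| = 0$, so that the dual change-of-variables bound gives $\pa u_\vep(A) \subset \overline{Y_\vep}$ with $\int_{\pa u_\vep(A)} g_\vep \leq \int_A f_\vep$. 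Both directions are true in this setting, and the paper leans on them just as implicitly, so this is a presentation issue rather than a gap.
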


Lemma~\ref{lem: open} allows us to extend Theorem~\ref{thm: Lip discty} to small $C^{1,\alpha}$ perturbations.

\begin{theorem}
\label{thm: C1a discty}
Given any $\vep > 0$, there exists an $\alpha = \alpha(\vep) > 0$, a smooth, convex domain $X$, and a domain $Y$ that is an $\vep$-small $C^{1,\alpha}$ perturbation of $X$ such that the optimal transport taking $X$ to $Y$ is discontinuous.
\end{theorem}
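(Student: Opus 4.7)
The plan is to regularize the example from Theorem~\ref{thm: Lip discty} to $C^{1,\alpha}$ and then invoke the stability Lemma~\ref{lem: open} to transfer the discontinuity to the smoothed domains. The key freedom is that $\alpha$ is allowed to depend on $\vep$.

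Fix $\vep>0$. I would first apply Theorem~\ref{thm: Lip discty} with parameter $\vep_1:=\eta\vep$ for some small $\eta=\eta(\vep)>0$ to be chosen, producing a rectangle $X_0$ and its Lipschitz perturbation $Y_0$ (carrying a triangular notch of depth $\vep_1$) such that the optimal transport from $X_0$ to $Y_0$ is discontinuous. I would then regularize both domains at a common small scale $\delta>0$: round the four outer corners into smooth strictly convex arcs to obtain a smooth convex $X_\delta$, and round both these outer corners and the three vertices of the notch to obtain $Y_\delta$ (a harmless rescaling restores $|X_\delta|=|Y_\delta|$). Writing $\pa Y_\delta$ as a graph of a function $\phi_\delta$ over $\pa X_\delta$ in local coordinates, standard mollification estimates yield
\[
\|\phi_\delta\|_{C^0}+\|\nab\phi_\delta\|_{C^0}\le C_1\vep_1, \qquad [\nab\phi_\delta]_{C^{0,\alpha}}\le C_2\vep_1\delta^{-\alpha},
\]
with $C_1,C_2$ absolute constants.

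I would then choose the parameters as follows. Fix $\eta:=1/(4C_1)$ so that the $C^1$ part of $\|\phi_\delta\|_{C^{1,\alpha}}$ is at most $\vep/2$, and pick $\delta_0>0$ small enough that Lemma~\ref{lem: open} (applied with $X_\delta,Y_\delta$ and indicator densities $\1_{X_\delta},\1_{Y_\delta}$ converging to $X_0,Y_0,\1_{X_0},\1_{Y_0}$ as $\delta\to 0$) forces the optimal transport from $X_{\delta_0}$ to $Y_{\delta_0}$ to be discontinuous. Finally, set $\alpha=\alpha(\vep)>0$ small enough that $\delta_0^{-\alpha}\le 1/(4C_2\eta)$, which is possible since $\delta_0$ is now fixed. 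With these choices $\|\phi_{\delta_0}\|_{C^{1,\alpha}}\le \vep$, and the pair $(X_{\delta_0},Y_{\delta_0})$ is the desired example.

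The main obstacle is the quantitative tension between two competing requirements: the perturbation must be $\vep$-small in $C^{1,\alpha}$, yet smoothing the notch (which is essential for the singular behavior) forces $[\nab\phi_\delta]_{C^{0,\alpha}}$ to blow up like $\delta^{-\alpha}$ as the smoothing scale $\delta$ shrinks. The resolution is precisely that $\alpha$ may depend on $\vep$: by first fixing $\delta_0$ to control stability and only then choosing $\alpha$ small (essentially of order $1/\log(1/\delta_0)$), one absorbs $\delta_0^{-\alpha}$ into an absolute constant and the construction closes.
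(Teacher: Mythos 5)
Your proof is correct and matches the paper's intended approach: the paper itself merely remarks that Lemma~\ref{lem: open} allows extending Theorem~\ref{thm: Lip discty} to $C^{1,\alpha}$ perturbations and leaves the construction implicit, and your argument supplies exactly the missing quantitative details. The key observation---fix the smoothing scale $\delta_0$ first via the stability lemma, then choose $\alpha$ small enough to absorb the $\delta_0^{-\alpha}$ blow-up in the H\"{o}lder seminorm---is precisely why the theorem permits $\alpha$ to depend on $\vep$, and you have identified and resolved that tension correctly.
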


That said, $C^{1,\alpha}$ is the borderline topology in which small perturbations can break an optimal transport's continuity.
More precisely, Theorem~\ref{thm: C1a discty} is sharp in view of Theorem~\ref{thm: C1a cty}.

\begin{theorem}
\label{thm: C1a cty}
Let $X$ be a $C^{1,\alpha}$ domain, $Y$ be an $\vep$-small $C^{1,\alpha}$ perturbation of $X$, and $\nab u$ be the optimal transport taking $X$ to $Y$.
If $\vep = \vep(\alpha) > 0$ is sufficiently small, then $\nab u : \overline{X} \to \overline{Y}$ is a bi-H\"{o}lder continuous homeomorphism.
\end{theorem}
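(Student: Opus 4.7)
My plan is to prove an $\vep$-regularity statement at the boundary by comparing the potential $u$ to the model $u_0(x) := |x|^2/2$, whose gradient is the identity map on $X$. First, I would use stability of optimal transports. Since the perturbation hypothesis gives $|X \triangle Y| = O(\vep)$, after adjusting constants the potentials satisfy $u \to u_0$ uniformly on $\overline{X}$ as $\vep \to 0$. Combined with Caffarelli's interior theory recalled in the Introduction, this provides strict convexity and $C^{1,\sig}_{\rm loc}(X)$ regularity of $u$ with a uniform modulus on compact subsets, once $\vep$ is small.

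The core of the argument is the boundary analysis. Fix $x_0 \in \pa X$; after a rigid motion, take $x_0 = 0$ with inward normal $\e_n$, and write $\pa X$ and $\pa Y$ locally as $C^{1,\a}$ graphs $\{x_n = \vphi_X(x')\}$ and $\{y_n = \vphi_Y(y')\}$ with $\|\vphi_X\|_{C^{1,\a}},\|\vphi_Y\|_{C^{1,\a}} \le L$ and $\|\vphi_X - \vphi_Y\|_{C^{1,\a}} \le \vep$. Subtracting a supporting affine function from $u$ at $0$ and looking at the sections $S_h := \{u < h\}$, stability together with the $C^{1,\a}$ structure force $S_h$ to be Hausdorff-close to the half-ball $B_{\sqrt{2h}}^+$. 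Rescaling $\tilde u(z) := h^{-1} u(\sqrt{h}\,z)$ then produces a Monge-Amp\`ere solution with $\det(D^2 \tilde u) = 1$ that transports one domain to another, both $C^{1,\a}$-close to the same half-space. A compactness/perturbation argument, leveraging the uniqueness of the identity as the optimal map between two half-spaces, should give $\|\tilde u - |z|^2/2\|_{L^\infty(B_1^+)} \le \vep^*$ with $\vep^* \to 0$ as $\vep \to 0$ (for an appropriate choice $h = h(\vep,\a)$). A dyadic iteration of this improvement then yields a uniform boundary $C^{1,\b}$ modulus for $\nab u$ at $x_0$, for some $\b = \b(\a) \in (0,\a)$.

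Combining the interior and boundary estimates gives $\nab u \in C^{0,\b}(\overline{X})$ with $\nab u(\overline{X}) = \overline{Y}$. Applying the same scheme to the Legendre dual $u^*$, and noting that $X$ is itself an $\vep$-small $C^{1,\a}$ perturbation of $Y$, yields $\nab u^* \in C^{0,\b}(\overline{Y})$; since $\nab u \circ \nab u^* = \mathrm{id}$ almost everywhere, $\nab u : \overline{X} \to \overline{Y}$ is a bi-H\"older homeomorphism.

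The main obstacle is the compactness/perturbation step. Because $Y$ is not assumed convex, Caffarelli's global boundary theory does not apply directly to the rescaled problem. The key is that the joint $C^{1,\a}$ control of $\pa X$ and $\pa Y$ at all small scales forces any blow-up limit to be an optimal transport between two half-spaces, for which the unique solution is the identity; converting this qualitative rigidity into a quantitative $C^{1,\b}$ improvement-of-flatness at each dyadic scale is the heart of the proof.
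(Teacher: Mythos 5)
Your proposal matches the paper's route: the paper proves this theorem by localizing (translations, a rotation, a shearing affine map, a dilation), applying the boundary $\vep$-regularity theorem (Theorem~\ref{thm: c1b at bdry}) --- which is precisely the half-space comparison, compactness, and improvement-of-flatness scheme you outline --- and then covering; the bi-H\"older claim follows by symmetry between $u$ and $u^*$ as you note. You also correctly pinpoint the crux (making the qualitative blow-up rigidity into a quantitative, scale-invariant $C^{1,\beta}$ decay), which the paper handles inside Theorem~\ref{thm: c1b at bdry} via the compactness Lemma~\ref{lem: cptness} and an iteration scheme with shearing normalizations and a case split on whether the base point lies on or off the boundary, since a $C^{1,\alpha}$ boundary does not flatten under rescaling as cleanly as a $C^2$ one.
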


\begin{proof}
The proof of Theorem~\ref{thm: C1a cty} is a consequence of a simple localization procedure (a pair of translations, a rotation, a shearing (affine) transformation, and a dilation), Theorem~\ref{thm: c1b at bdry}, and a covering argument.
\end{proof}

The following theorem is a generalization of the $\vep$-regularity theorem at the boundary for $C^2$ domains proved by Chen and Figalli (\cite[Theorem~2.1]{CF}) to $C^{1,\alpha}$ domains.
Heuristically, since $C^{1,\alpha}$ domains, like $C^2$ domains, flatten under dilations, we might expect that extending their arguments to our setting is rather simple.
However, in practice, our situation is quite delicate and some additional details and new ideas must be presented and developed.
That said, for an explanation of any estimate or computation that does not specifically see the difference between a $C^2$ and $C^{1,\alpha}$ boundary, we refer the reader to their proof or the proofs of \cite[Theorem~4.3]{DF} and \cite[Proposition~4.2]{J}.

Let $x \in \R^n$ be given by $x = (x_0,x_n) \in \R^{n-1} \times \R$ and $\B_R := \{ x_0 \in \R^{n-1} : |x_0| < R \} = B_R \cap \{ x_n = 0 \}$.

\begin{theorem}
\label{thm: c1b at bdry}
Let $\C$ and $\K$ be two closed subsets of $\R^n$ such that
\[
B_{1/2} \cap \{x_n \geq \gamma(x_0) \} \subset \C \subset B_2 \cap \{x_n \geq \gamma(x_0) \}
\]
and
\[
B_{1/2} \cap \{y_n \geq \zeta(y_0) \} \subset \K \subset B_2 \cap \{y_n \geq \zeta(y_0) \}
\]
where 
\begin{equation}
\label{eqn: bdry 0 at 0}
\gamma,\zeta \in C^{1,\alpha}(\B_4), \qquad \gamma(0)=\zeta(0)= 0, \qquad\text{and}\qquad \nab \gamma(0)=\nab \zeta(0)= 0.
\end{equation} 
Let $u$ be a convex potential such that $(\nab u)_{\#}f = g$ for two densities $f$ and $g$ supported on $\C$ and $\K$ respectively.
Given $\beta \in (0,1)$, there exist constants $r, \eta, \delta > 0$, with $\delta = \delta(\eta)$ and $\eta = \eta(\alpha,\beta,n)$, such that the following holds: if
\begin{equation}
\label{eqn: bdry data is flat}
[\nab \gamma]_{C^{0,\alpha}(\B_4)} + [\nab \zeta]_{C^{0,\alpha}(\B_4)} \leq \delta,
\end{equation}
\begin{equation}
\label{eqn: densities close to 1}
\|f - \1_\C\|_{\infty} + \|g - \1_\K\|_{\infty} \leq \del,
\end{equation}
and
\begin{equation}
\label{eqn: solution close to quadratic}
\bigg{\|}u(x)-\frac{1}{2}|x|^2\bigg{\|}_{L^\infty(B_{1/2} \cap \{x_n \geq \gamma(x_0) \})} \leq \eta,
\end{equation}
then $u \in C^{1,\beta}(B_r \cap \{x_n \geq \gamma(x_0) \})$.
\end{theorem}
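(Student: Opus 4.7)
The plan is to prove the theorem via the standard iteration-of-flatness scheme, following the blueprint of Chen-Figalli \cite{CF}. The only structural change is that under a dilation by $r \in (0,1)$, the $C^{0,\a}$ seminorm of $\nab\g$ (and of $\nab\z$) shrinks by $r^\a$ rather than by $r$ as in the $C^2$ case treated there; crucially, this remains a genuine contraction at each scale, so iterating at scales $\rho^k$ still produces boundaries that flatten geometrically.

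The heart of the argument is a one-step improvement lemma: for fixed small $\rho = \rho(\a,\b,n) \in (0,1)$ and sufficiently small $(\eta,\del)$ with $\del = \del(\eta) \ll \eta$, there exist a symmetric positive-definite matrix $A$ with $\det A = 1$, a vector $b \in \R^n$, and a constant $c$, with $\|A - \Id\| + |b| + |c| \leq C\eta^{1/2}$, such that the rescaled, affinely-corrected potential obtained from $u$ by subtracting the quadratic $\tfrac12\langle A^{-1}x, A^{-1}x\rangle + b\cdot x + c$ and then dilating by $\rho$ satisfies the hypotheses of the theorem at unit scale with new parameters $\tilde\eta \leq \rho^\b \eta$ and $\tilde\del \leq C(\rho^\a\del + \eta^{1/2})$. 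I would prove this lemma by compactness and contradiction. Supposing it fails, extract a sequence of counterexamples $(u_k, f_k, g_k, \g_k, \z_k)$ with $\eta_k \to 0$ and form the normalized remainders $w_k := \eta_k^{-1}(u_k - \tfrac12|x|^2)$. These are uniformly bounded, and by Caffarelli's interior $C^{0,\sig}$ estimate \cite{C1} together with the boundary control from \eqref{eqn: bdry 0 at 0}--\eqref{eqn: bdry data is flat}, they admit a locally uniform limit $w_\infty$ on $\overline{B_{1/2} \cap \{x_n \geq 0\}}$. Linearizing the Monge-Amp\`ere equation around the identity identifies $w_\infty$ as a bounded harmonic function on $B_{1/2} \cap \{x_n > 0\}$ with the Neumann condition $\pa_n w_\infty = 0$ on the flat face $\B_{1/2}$ (coming from the fact that $\pa_n u_k = 0$ along $\pa\C_k$ in the flat limit, since $[\nab\g_k]_{C^{0,\a}} + [\nab\z_k]_{C^{0,\a}} \to 0$). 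Standard boundary regularity for harmonic functions then gives $w_\infty \in C^\infty$ up to $0$, and its second-order Taylor polynomial supplies the matrix $A$ and vector $b$ of the lemma, contradicting the assumed failure via locally uniform convergence $w_k \to w_\infty$.

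Once the lemma is in hand, iterating it at scales $\rho^k$ produces a Cauchy sequence of affine corrections whose limit is the first-order Taylor expansion of $u$ at $0$ with the right $C^{1,\b}$ modulus; a standard covering argument upgrades this to $C^{1,\b}$ on $B_r \cap \{x_n \geq \g(x_0)\}$. The main obstacle, and the only real novelty over \cite{CF}, is the bookkeeping of how the boundary graphs transform under the affine coordinate change by $A$ followed by the dilation by $\rho$: one must check that the new defining functions $\tilde\g, \tilde\z$ still vanish to first order at $0$ (any small linear discrepancy being absorbed into $b$) and that the geometric bound $\tilde\del \leq C(\rho^\a\del + \eta^{1/2})$ holds, with $\rho^\a$ in place of the $\rho$ available in \cite{CF}. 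Provided $\del = \del(\eta)$ is chosen so that $\eta^{1/2}$ is dominated by $\del$ at each step---exactly the dependence mandated by the statement---the iteration closes and the theorem follows.
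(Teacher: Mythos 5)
Your approach replaces the paper's ``comparison to a smooth Monge--Amp\`ere solution'' scheme with a ``linearization-to-Laplace'' scheme, which is a genuinely different route. In the paper, the one-step improvement is built by comparing $u$ to the optimal transport $\check v$ between enlarged domains (Lemma~\ref{lem: cptness}), deducing uniform $C^3$ regularity of $\check v$ from even reflection plus interior Monge--Amp\`ere estimates, and then using $A = [D^2\check v(0)]^{-1/2}$ together with a shear $M$ to normalize; the flatness parameter $\check\eta$ is \emph{maintained} across iterations while the sections $S_{u_1}(h^k)$ shrink geometrically, and the boundary seminorm contracts cleanly as $[\nab\g_{k+1}]_{C^{0,\a}} \leq c_0[\nab\g_k]_{C^{0,\a}}$ with no additive error because $M$ is chosen to exactly kill the off-normal entries. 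Your scheme instead decays $\eta$ and feeds an $\eta^{1/2}$-error into $\del$; while plausible in spirit, two concrete steps fail as written.

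First, the compactness of $w_k := \eta_k^{-1}(u_k - \tfrac12|x|^2)$ is not a consequence of Caffarelli's interior $C^{0,\sig}$ estimate. That estimate bounds $\nab u_k$ in $C^{0,\sig}_{\rm loc}$ uniformly, hence bounds the modulus of continuity of $u_k - \tfrac12|x|^2$ \emph{independently} of $\eta_k$ but not at order $\eta_k$; after dividing by $\eta_k$ the modulus blows up, and Arzel\`a--Ascoli does not apply. Getting local uniform limits of normalized remainders for Alexandrov solutions whose right-hand side is merely $L^\infty$-close to $1$ is precisely the step that forces one to first compare to the pure $\det D^2 v = 1$ solution and borrow Pogorelov/Schauder regularity for $v$ --- which is what Lemma~\ref{lem: cptness} accomplishes and what your write-up skips. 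Second, the bookkeeping for $(\del,\eta)$ is internally inconsistent: linearizing $\det D^2 u_k = f_k/g_k(\nab u_k)$ around $\Id$ gives $\Delta w_k = O(\del_k/\eta_k) + o(1)$, so identifying $w_\infty$ as harmonic requires $\del_k/\eta_k \to 0$, while identifying the Neumann condition $\pa_n w_\infty = 0$ on the flat face likewise needs the boundary misalignment to be $o(\eta_k)$. But $\tilde\del \leq C(\rho^\a\del + \eta^{1/2})$ with $\tilde\eta \leq \rho^\b\eta$ gives $\tilde\del/\tilde\eta \gtrsim \rho^{-\b}\eta^{-1/2} \to \infty$, and the stated remedy (``$\eta^{1/2}$ dominated by $\del$'') forces $\del \geq \eta^{1/2} \gg \eta$, the opposite of what the linearization requires. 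Until the affine recentering is chosen so that it does \emph{not} pollute the boundary flatness at order $\eta^{1/2}$ --- the role of the shear $M$ (and $L$) in the paper's Step~4 --- the iteration cannot close.
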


In what follows, we let $C$ and $c$ be generic positive constants that may change from line to line.
Their dependencies, if any, will either be clear from context or explicitly given.

Before proceeding with the proof of Theorem~\ref{thm: c1b at bdry}, let us make a remark and an associated definition.
From the point of view of optimal transportation, the cost $-x\cdot y$ is the same as $-x\cdot y|_{\spt f \times \spt g}$.
So, we shall often work with the intersection of the subdifferential of our convex potential with the support of our target measure.
In particular, we define
\[
\pa_\ast u(x) := \pa u(x) \cap \spt g \qquad\text{and}\qquad \pa_\ast u(E) := \bigcup_{x \in E} \pa_\ast u(x)
\]
when $(\nab u)_{\#} f = g$.

\begin{proof}
For clarity's sake, we divide the proof into several steps.\\

{\it -- Step 1: An initial normalization.}\\

Let 
\[
x^0 \in B_r \cap \{x_n \geq \gamma(x_0) \}  \qquad\text{and}\qquad y^0 \in \pa_\ast u(x^0).
\]  
Thus, using \eqref{eqn: bdry 0 at 0} and \eqref{eqn: bdry data is flat}, we have that
\begin{equation}
\label{eqn: lower bound ybar2}
y^0_n \geq \zeta(y^0_0) \geq -4\delta.
\end{equation}

Notice that
\[
w(x) := u(x) - \frac{1}{2}|x|^2 + \frac{1}{2}|x-x^0|^2
\]
is convex and $y^0-x^0 \in \pa w(x^0)$.
Hence, by \eqref{eqn: solution close to quadratic}, \eqref{eqn: bdry 0 at 0}, and \eqref{eqn: bdry data is flat}, we deduce that for $\e \in \mathbb{S}^{n-1}$ such that $\angle(\e, \e_n) = \pi/4$,
\begin{equation}
\label{eqn: bar y - bar x estimate}
(y^0 - x^0) \cdot \e \leq \frac{w(x^0 + \eta^{1/2} \e) - w(x^0)}{\eta^{1/2}} \leq \frac{5}{2}\eta^{1/2}
\end{equation}
provided that $r+\eta^{1/2} < 1/2$.
The same estimate holds with $\e = \e_n$.
If $x^0_n - \gamma(x^0_0) \geq \eta^{1/2}$, then $x^0 - \eta^{1/2}\e_n \in B_{1/2} \cap \{x_n \geq \gamma(x_0) \}$.
So, using $w$ as before, we find that
\[
(y^0 - x^0)  \cdot (-\e_n) \leq \frac{5}{2}\eta^{1/2}.
\]
If, on the other hand, $x^0_n - \gamma(x^0_0) < \eta^{1/2}$, then
\[
(y^0 - x^0)  \cdot (-\e_n)  
\leq \eta^{1/2} + (4+r)\delta
\leq C(\eta^{1/2}+\delta),
\]
recalling \eqref{eqn: lower bound ybar2} and noticing that $|\gamma(x^0_0)| \leq \delta r$ by \eqref{eqn: bdry 0 at 0} and \eqref{eqn: bdry data is flat}.
As every $\theta \in \mathbb{S}^{n-1}$ can be written as a linear combination of $\pm \e_n$ and some $\e$ such that $\angle(\e, \e_n) = \pi/4$ with positive coefficients, it follows that
\begin{equation}
\label{eqn: image of point under subdiff}
|y^0 - x^0| \leq C(\eta^{1/2}+\delta).
\end{equation}

First, consider the change of variables
\[
\hat{x} := x - x^0 \qquad\text{and}\qquad \hat{y} := y - y^0.
\]
Notice that
\[
\hat{\gamma}(\hat{x}_0) := \gamma(\hat{x}_0+x^0_0) - x^0_n \qquad\text{and}\qquad \hat{\zeta}(\hat{y}_0) := \zeta(\hat{y}_0+y^0_0) - y^0_n
\] 
define the lower boundaries of $\hat{\C} := \C - x^0$ and $\hat{\K}:= \K - y^0$ respectively and, by \eqref{eqn: bdry 0 at 0}, \eqref{eqn: bdry data is flat}, and \eqref{eqn: image of point under subdiff},
\[
0 \geq \hat{\zeta}(0) > -C(\eta^{1/2} + \delta).
\]
Furthermore, using \eqref{eqn: solution close to quadratic} and \eqref{eqn: image of point under subdiff}, we have that
\begin{equation*}
\label{eqn: translated solution close to quadratic}
\bigg{\|}\hat{u}(\hat{x})-\frac{1}{2}|\hat{x}|^2\bigg{\|}_{L^\infty(B_{5/12} \cap \{ \hat{x}_n \geq \hat{\gamma}(\hat{x}_0) \})} \leq C(\eta^{1/2}+\delta)
\end{equation*}
where
\[
\hat{u}(\hat{x}) := u(x) - u(x^0) - y^0\cdot(x-x^0).
\]
Also, defining $\hat{f}(\hat{x}) := f(\hat{x}+x^0)$ and $\hat{g}(\hat{y}) := g(\hat{y}+y^0)$, it is clear that $(\nab \hat{u})_\# \hat{f} = \hat{g}$.
\\

{\it -- Case 1: $x^0 \in \{ x_n = \gamma(x_0)\}$.}\\

Let $R$ be the rotation matrix that makes the tangent line to $\hat{\C}$ at $0$ (which is on the lower boundary of $\hat{\C}$) horizontal and consider the change of coordinates
\[
\bar{x} := R\hat{x} \qquad\text{and}\qquad \bar{y} := (R^\ast)^{-1}\hat{y}.
\]
By \eqref{eqn: bdry 0 at 0} and \eqref{eqn: bdry data is flat}, we see that $|\nab \hat{\gamma}(0)| \leq \delta r^\alpha$.
Therefore, the angle defining $R$ is smaller than $\delta r^\alpha$.
So, letting $\bar{\gamma}$ and $\bar{\zeta}$ define the lower boundaries of $\bar{\C} := R\hat{\C}$ and $\bar{\K} := (R^\ast)^{-1}\hat{\K}$ respectively, it follows that
\[
[\nab \bar{\gamma}]_{C^{0,\alpha}(\B_3)} + [\nab \bar{\zeta}]_{C^{0,\alpha}(\B_3)} \leq C\delta
\]
and
\[
0 \geq \bar{\zeta}(0) > -C(\eta^{1/2} + \delta).
\]
Furthermore, letting
\[
\bar{u}(\bar{x}) := \hat{u}(R^{-1}\bar{x}), \qquad \bar{f}(\bar{x}) := \hat{f}(R^{-1}\bar{x}), \qquad\text{and}\qquad \bar{g}(\bar{y}) := \hat{g}(R^\ast\bar{y}),
\]
we have that
\begin{equation*}
\bigg{\|}\bar{u}(\bar{x})-\frac{1}{2}|\bar{x}|^2\bigg{\|}_{L^\infty(B_{5/12} \cap \{ \bar{x}_n \geq \bar{\gamma}(\bar{x}_0) \})} \leq C(\eta^{1/2}+\delta)
\end{equation*}
and $(\nab \bar{u})_\# \bar{f} = \bar{g}$.

Finally, define the change of variables
\[
\check{x} := N\bar{x} \qquad\text{and}\qquad \check{y} := (N^{\ast})^{-1} \bar{y}
\]
with
\[
N\bar{z} := \bar{z} + (\nab \bar{\zeta}(0),0)\bar{z}_n.
\]
If we let $\check{\gamma}$ and $\check{\zeta}$ define the lower boundaries of $\check{\C} := N\bar{\C}$ and $\check{\K} := (N^\ast)^{-1}\bar{\K}$ respectively, then 
\[
\nab \check{\gamma}(0) = \nab \check{\zeta}(0) = 0.
\]
Moreover,
\[
[\nab \check{\gamma}]_{C^{0,\alpha}(\B_3)} + [\nab \check{\zeta}]_{C^{0,\alpha}(\B_3)} \leq C\delta.
\]
Also, note that
\[
|N - \Id| \leq C\delta
\]
provided $\eta$, $\delta$, and $r$ are sufficiently small.
\\ 

{\it -- Case 2: $x^0 \in \{ x_n > \gamma(x_0)\}$.}\\

From \eqref{eqn: bdry 0 at 0} and \eqref{eqn: bdry data is flat}, we see that the angle between the $\hat{x}_n$-axis and the line through the origin that meets $\{ \hat{x}_n = \hat{\gamma}(\hat{x}_0) \}$ orthogonally is at most $4\delta$.
So, let $R$ be the rotation matrix that makes this line vertical and consider the change of coordinates
\[
\check{x} := R\hat{x} \qquad\text{and}\qquad \check{y} := (R^\ast)^{-1}\hat{y}.
\]
Then,
\[
[\nab \check{\gamma}]_{C^{0,\alpha}(\B_3])} + [\nab \check{\zeta}]_{C^{0,\alpha}(\B_3)} \leq C\delta
\]
where $\check{\gamma}$ and $\check{\zeta}$ define the lower boundaries of $\check{\C} := R\hat{\C}$ and $\check{\K} := (R^\ast)^{-1}\hat{\K}$ respectively.
\\

In summary, if we define the potential
\[
\check{u}(\check{x}) := \bar{u}(N^{-1}\check{x}) \qquad\text{or}\qquad \check{u}(\check{x}) := \hat{u}(R^{-1}\check{x})
\]
and the densities
\[
\check{f}(\check{x}) := \bar{f}(N^{-1}\check{x}) \quad\text{and}\quad \check{g}(\check{y}) := \bar{g}(N^\ast \check{y}) \qquad\text{or}\qquad \check{f}(\check{x}) := \hat{f}(R^{-1}\check{x}) \quad\text{and}\quad \check{g}(\check{y}) := \hat{g}(R^\ast \check{y})
\]
depending on whether we are in Case 1 or Case 2, then
\[
(\nab \check{u})_{\#}\check{f} = \check{g},
\]
and provided that $r$, $\delta$, and $\eta$ are sufficiently small,
\[
B_{1/3} \cap \{\check{x}_n \geq \check{\gamma}(\check{x}_0) \} \subset \check{\C} \subset B_3 \cap \{\check{x}_n \geq \check{\gamma}(\check{x}_0) \}
\]
and
\[
B_{1/3} \cap \{\check{y}_n \geq \check{\zeta}(\hat{y}_0) \} \subset \check{\K} \subset B_3 \cap \{\check{y}_n \geq \check{\zeta}(\check{y}_0) \}
\]
with
\[
[\nab \check{\gamma}]_{C^{0,\alpha}(\B_3)} + [\nab \check{\zeta}]_{C^{0,\alpha}(\B_3)} \leq \check{\delta}.
\]
If $x^0_n = \gamma(x^0_0)$, then 
\[
\check{\gamma}(0) = 0, \qquad -(\check{\eta}+\check{\delta}) \leq \check{\zeta}(0) \leq 0, \qquad\text{and}\qquad \nab \check{\gamma}(0) = \nab \check{\zeta}(0)=0;
\]
where as if $x^0_n > \gamma(x^0_0)$, then
\[
-r < \check{\gamma}(0) \leq 0, \qquad -(\check{\eta}+\check{\delta} + r) < \check{\zeta}(0) \leq 0, \qquad \nab\check{\gamma}(0)=0, \qquad\text{and}\qquad |\nab \check{\zeta}(0)| \leq \check{\delta}.
\]
Furthermore,
\begin{equation}
\label{eqn: check u close to parabola}
\bigg{\|}\check{u}(\check{x})-\frac{1}{2}|\check{x}|^2\bigg{\|}_{L^\infty(B_{1/3} \cap \{ \check{x}_n \geq \check{\gamma}(\check{x}_0) \})} \leq \check{\eta},
\end{equation}
and by \eqref{eqn: densities close to 1},
\[
\|\check{f} - \1_{\check{\C}}\|_{\infty} + \|\check{g} - \1_{\check{\K}}\|_{\infty} \leq \check{\del}.
\]
Here, $\check{\delta} \to 0$ as $\delta \to 0$ and $\check{\eta} \to 0$ as $\eta^{1/2} + \delta \to 0$.
\\

{\it -- Step 2:  Finding and estimating a smooth approximation of $\check{u}$.}\\

We begin with an important lemma.

\begin{lemma}
\label{lem: cptness}
Let $\C$ and $\K$ be two closed subsets of $\R^n$ such that
\[
B_{1/R} \cap \{x_n \geq \gamma(x_0) \} \subset \C \subset B_R \cap \{x_n \geq \gamma(x_0) \}
\]
and
\[
B_{1/R} \cap \{y_n \geq \zeta(y_0) \} \subset \K \subset B_R \cap \{y_n \geq \zeta(y_0) \}
\]
where $\gamma$ and $\zeta$ are of class $C^{1,\alpha}(\B_R)$ and such that
\[
-\frac{1}{R^3} \leq \gamma(0), \zeta(0) \leq 0 \qquad\text{and}\qquad |\nab \gamma(0)|, |\nab \zeta(0)| \leq \delta.
\]
Let
\[
l_\gamma := |\gamma(0)| + |\nab \gamma(0)|R + [\nab \gamma]_{C^{0,\alpha}(\B_R)}R^2 \qquad\text{and}\qquad
l_\zeta :=  |\zeta(0)|  + |\nab \zeta(0)|R + [\nab \zeta]_{C^{0,\alpha}(\B_R)}R^2,
\]
and define
\[
\C_+ := \C \cup (B_{1/R} \cap \{ x_n \geq  -l_\gamma \}) \qquad\text{and}\qquad
\K_+ := \K \cup (B_{1/R} \cap \{ y_n \geq  -l_\zeta \}).
\]
Suppose $u$ is a convex function such that $(\nab u)_{\#}f = g$ for two densities $f$ and $g$ supported on $\C$ and $\K$ respectively. 
Set $\l > 0$ be such that $|\C_+| = |\l \K_+|$, where $\l \K_+$ denotes the dilation of $\K_+$ with respect to the origin, and let $v$ be a convex function such that $v(0) = u(0)$ and $(\nab v)_{\#}\1_{\C_+} = \1_{\l \K_+}$.
In addition, let $u^\ast$ and $v^\ast$ be such that $u^\ast(0) = v^\ast(0)$, $(\nab u^\ast)_{\#}g = f$, and $(\nab v^\ast)_{\#}\1_{\l\K_+} = \1_{\C_+}$.
Then, there exists a nonnegative, increasing function $\om = \om(\del)$, depending on $R$, such that $\om(\del) \geq \del$, $\om(0^+) = 0$, and the following holds: if
\[
[\nab \gamma]_{C^{0,\alpha}(\B_R)} + [\nab \zeta]_{C^{0,\alpha}(\B_R)} \leq \delta
\]
and
\[
\|f - \1_\C\| + \|g - \1_\K\| \leq \del,
\]
then
\[
\|u-v\|_{L^\infty(B_{1/R} \cap \C)}  + \|u^\ast-v^\ast\|_{L^\infty(B_{1/R^2} \cap \K)} \leq \om(\delta).
\]
\end{lemma}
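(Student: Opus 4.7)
My plan is a compactness-and-contradiction argument leveraging stability of Brenier potentials under $L^1$ convergence of the marginals together with uniqueness of the optimal transport in the limit problem.

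Assuming the lemma fails, I would extract sequences $\gamma_k,\zeta_k,\C_k,\K_k,f_k,g_k,u_k,v_k$ satisfying the hypotheses with $\delta_k\to 0$ but violating the modulus bound by a fixed amount $\sigma>0$. Since $|\gamma_k(0)|\leq R^{-3}$, $|\nab\gamma_k(0)|\leq\delta_k$, and $[\nab\gamma_k]_{C^{0,\a}(\B_R)}\leq\delta_k$, Arzela--Ascoli yields $\gamma_k\to c_\gamma$ in $C^1(\B_R)$ along a subsequence, with $c_\gamma\in[-R^{-3},0]$ constant; likewise $\zeta_k\to c_\zeta$. Hausdorff compactness plus the flattening of the lower boundaries then gives $\C_k\to\C_\infty$ and $\K_k\to\K_\infty$ in $L^1$, where
\[
B_{1/R}\cap\{x_n\geq c_\gamma\}\subset\C_\infty\subset\overline{B_R}\cap\{x_n\geq c_\gamma\}
\]
and analogously for $\K_\infty$. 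Because $l_{\gamma_k}\to|c_\gamma|$ and $l_{\zeta_k}\to|c_\zeta|$, the enlargements $\C_{k,+},\K_{k,+}$ converge in $L^1$ to the same limits, and mass balance forces $\l_k\to 1$.

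Next I would apply Brenier stability: combining $\|f_k-\1_{\C_k}\|_\infty+\|g_k-\1_{\K_k}\|_\infty\leq\delta_k$ with the above convergences yields $f_k\to\1_{\C_\infty}$, $g_k\to\1_{\K_\infty}$, $\1_{\C_{k,+}}\to\1_{\C_\infty}$, and $\1_{\l_k\K_{k,+}}\to\1_{\K_\infty}$ in $L^1$. Using the normalizations $u_k(0)=v_k(0)$ and $u_k^\ast(0)=v_k^\ast(0)$, the stability theorem then sends both $u_k$ and $v_k$ to the \emph{same} limit $u_\infty$ locally uniformly (the unique Brenier potential from $\1_{\C_\infty}$ to $\1_{\K_\infty}$ with the prescribed value at the origin), and similarly $u_k^\ast,v_k^\ast\to u_\infty^\ast$ locally uniformly on the interior of $\K_\infty$. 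Taking $L^\infty$ norms over $B_{1/R}\cap\C_k$ and $B_{1/R^2}\cap\K_k$ then produces a contradiction with $\sigma$.

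The main obstacle will be the Legendre-transform half of the estimate, which demands uniform convergence of $u_k^\ast$ on a set that does not collapse with $k$. This needs the limit potential $u_\infty$ to be strictly convex and $C^1$ on the interior of $\C_\infty$, so that $\nab u_\infty$ is a homeomorphism between interior pieces of $\C_\infty$ and $\K_\infty$; for the limit problem, with two constant-density marginals and flat lower boundaries, this is a direct application of Caffarelli's interior regularity theorem. The gap between $1/R$ and $1/R^2$ in the dual estimate supplies the buffer ensuring that $\nab u_k^\ast(B_{1/R^2}\cap\K_k)$ lies inside a compact subset of the interior of $\C_\infty$ on which the uniform convergence is available.
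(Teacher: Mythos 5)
Your compactness-and-contradiction strategy is the same one the paper uses: it cites the proof of Chen--Figalli's Lemma~4.1, which is exactly this normalize--extract--pass-to-the-limit argument using stability of Brenier potentials. Two of your claims, however, are imprecise in a way worth flagging. First, Hausdorff convergence of the closed sets $\C_k,\K_k$ does \emph{not} give $\1_{\C_k}\to\1_{\C_\infty}$ in $L^1$; the sets in the annulus $B_R\setminus B_{1/R}$ are essentially arbitrary. What you actually need, and what suffices, is only that $f_k\,\d x$ and $\1_{\C_{k,+}}\,\d x$ share the same weak-$\ast$ limit (and likewise $g_k\,\d y$ and $\1_{\l_k\K_{k,+}}\,\d y$). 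This follows from $\|f_k-\1_{\C_k}\|_\infty\to 0$ together with $|\C_{k,+}\setminus\C_k|\lesssim\delta_k$ (a consequence of the $C^{1,\a}$ smallness hypotheses), plus $\l_k\to 1$; no convergence of the sets themselves to a limit set is required. Second, the concern in your last paragraph is misplaced: you do not need strict convexity or $C^1$ regularity of the limit potential to handle the dual estimate. The dual potentials $u_k^\ast,v_k^\ast$ are themselves Brenier potentials for the reversed transports, hence uniformly Lipschitz with constant $\leq R$, normalized at the origin, and the associated pairs of marginals share weak-$\ast$ limits just as before; the same stability-and-uniqueness argument applied to the dual problem directly yields $\|u_k^\ast-v_k^\ast\|_{L^\infty(B_{1/R^2}\cap\K_k)}\to 0$. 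Caffarelli's interior theory is not needed here.
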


\begin{proof}[Proof of Lemma~\ref{lem: cptness}]
The proof is identical to that of \cite[Lemma~4.1]{CF}.
\end{proof}

Choose $\l > 0$ such that $|\check{\C}_+| = |\l \check{\K}_+|$, where $\check{\C}_+$ and $\check{\K}_+$ are defined as in Lemma~\ref{lem: cptness}, and let $\check{v}$ be a convex function such that $(\nab \check{v})_{\#} \1_{\check{\C}_+} = \1_{\l \check{\K}_+}$ and $\check{v}(0) = \check{u}(0) = 0$.
By Lemma~\ref{lem: cptness},
\begin{equation}
\label{eqn: tilde u close to tilde v}
\|\check{u}-\check{v}\|_{L^\infty(B_{1/3} \cap \{\check{x}_n \geq \check{\gamma}(\check{x}_0) \})} \leq \om(\check{\del}).
\end{equation}
Define $\check{\C}_+'$ to be the reflection of $\check{\C}_+$ over the hyperplane $\{\check{x}_n = -l_{\check{\gamma}} \}$ and $(\l \check{\K}_+)'$ to be the reflection of $\l \check{\K}_+$ over the hyperplane $\{\check{y}_n = -\l l_{\check{\zeta}} \}$.
If $\check{v}'$ is a convex potential whose gradient is the optimal transport taking $\1_{\check{\C}_+'}$ to $\1_{(\l \check{\K}_+)'}$, then, by symmetry, $\nab \check{v}'|_{\check{\C}_+} = \nab \check{v}$.
Also,
\begin{equation}
\label{eqn: symmetry}
\nab \check{v}'( \{\check{x}_n = -l_{\check{\gamma}} \} ) \subset \{\check{y}_n = -\l l_{\check{\zeta}} \}.
\end{equation}
Without loss of generality, $\check{v}'(0) = \check{v}(0) = 0$.
Therefore, by \eqref{eqn: check u close to parabola} and \eqref{eqn: tilde u close to tilde v}, 
it follows that
\[
\bigg{\|}\check{v}'(\tilde{x})-\frac{1}{2}|\check{x}|^2\bigg{\|}_{L^\infty(B_{1/3} \cap \{\check{x}_n \geq \check{\gamma}(\check{x}_0) \})} 
\leq \check{\eta} + \om(\check{\del}).
\]
Then, by symmetry and the convexity of $\check{v}'$, we deduce that
\begin{equation}
\label{eqn: convexity for reg}
\bigg{\|}\check{v}'(\tilde{x})-\frac{1}{2}|\check{x}|^2\bigg{\|}_{L^\infty(B_{1/6})} 
\leq C_0(\check{\eta} + \om(\check{\del}) + |\check{\gamma}(0)| + |\nab \check{\gamma}(0)|+|\check{\zeta}(0)| + |\nab \check{\zeta}(0)|).
\end{equation}
So, provided $\check{\eta}$, $\check{\delta}$, and $r$ are sufficiently small (recall the lines just before \eqref{eqn: check u close to parabola}), arguing as in \cite[Theorem~4.3, Step 1]{DF}, we have that 
\begin{equation}
\label{eqn: MA equation for v tilde}
\det(D^2\check{v}') = 1 \quad\text{in}\quad B_{1/7},
\end{equation}
in the Alexandrov sense, and $\check{v}'$ is uniformly convex and smooth inside $B_{1/8}$.
In particular,
\begin{equation}
\label{eqn: C3 estimate}
\|\check{v}'\|_{C^3(B_{1/8})} \leq C_1 \qquad\text{and}\qquad \frac{1}{C_1} \Id \leq D^2\check{v}' \leq C_1 \Id \quad\text{in}\quad B_{1/8}.
\end{equation}
Moreover,
\begin{equation}
\label{eqn: sections of tilde v}
Z_{\check{v}'}(h) := \{ \check{x} : \check{v}'(\check{x}) \leq \nab \check{v}'(0)\cdot \check{x} + h \} \Subset B_{1/9} \qquad\forall h \leq \check{h}
\end{equation}
where $\check{h} > 0$ is a small constant.

From now on, we will not distinguish $\check{v}'$ and $\check{v}$.

Let us estimate $\nab \check{v}(0)$ and $D^2 \check{v}(0)$.
Arguing as we did to prove \eqref{eqn: image of point under subdiff}, considering the convex function
\[
\check{w}(\check{x}) := \check{u}(\check{x}) - \check{v}(\check{x}) + \frac{C_1}{2}|\check{x}|^2,
\]
we deduce that
\begin{equation}
\label{eqn: gradient tilde v at 0}
|\nab \check{v}(0)| \leq C_2\om(\check{\del})^{1/2}.
\end{equation}
By \eqref{eqn: symmetry}, $\pa_n\check{v}$ is constant on $\{\check{x}_n = -l_{\check{\gamma}}  \}$, from which we infer that
\begin{equation}
\label{eqn: symmetry Hessian at 0}
\pa_{in} \check{v}|_{\{\check{x}_n = -l_{\check{\gamma}} \}} \equiv 0 \qquad \forall i = 1, \dots, n-1.
\end{equation}
Also, using \eqref{eqn: C3 estimate}, we find that
\begin{equation}
\label{eqn: Hessian of tilde v at 0}
|\pa_{ij}\check{v}(0) - \pa_{ij}\check{v}(0,-l_{\check{\gamma}})| \leq C_1l_{\check{\gamma}}.
\end{equation}
\

{\it -- Step 3: Estimating the eccentricity of $\check{u}$.}\\

Let 
\[
A := [D^2\check{v}(0)]^{-1/2}.
\]
Taylor expanding $\check{v}$ around the origin, recalling that $\check{v}(0) = \check{u}(0) = 0$, and using \eqref{eqn: tilde u close to tilde v}, \eqref{eqn: C3 estimate}, and \eqref{eqn: gradient tilde v at 0}, we see that
\begin{equation}
\label{eqn: intial poly approximation of tilde u}
\begin{split}
\bigg{\|} \check{u}(\check{x}) - \frac{1}{2}|A^{-1}\check{x}|^2\bigg{\|}_{L^\infty(AB_{(16h)^{1/2}} \cap \{\check{x}_n \geq \check{\gamma}(\check{x}_0) \})} 
&\leq \om(\check{\del}) + 4C_1^{1/2}C_2\om(\check{\del})^{1/2}h^{1/2} + 64C_1^{5/2}h^{3/2}\\ 
&\leq \frac{1}{2}\check{\eta}h
\end{split}
\end{equation}
first choosing $h$ and then choosing $\check{\del}$ also sufficiently small depending on $\check{\eta}$.
Similarly, first choosing $h$ and then choosing $\check{\del}$ also sufficiently small depending on $\check{\eta}$, we find that
\begin{equation*}
\bigg{\|} \check{u}^\ast(\check{y}) - \frac{1}{2}|A\check{y}|^2\bigg{\|}_{L^\infty(A^{-1}B_{(16h)^{1/2}} \cap \{\check{y}_n \geq \check{\zeta}(\check{y}_0) \})}  \leq \frac{1}{2}\check{\eta}h.
\end{equation*}

Now, notice that the sections of $\check{u}$ and $\check{v}$ appropriately restricted are comparable if $\check{\del}$ is sufficiently small.
In particular, choose $h \leq 2\check{h}/3$ and take $\check{\del}$ small enough so that $(C_2 + 1)\om(\check{\del})^{1/2} \leq h/2$.
Then, recalling \eqref{eqn: sections of tilde v}, we have that
\begin{equation}
\label{eqn: sections are comparable}
Z_{\check{v}}(h/2) \cap \{\check{x}_n \geq \check{\gamma}(\check{x}_0) \} \subset S_{\check{u}}(h) \subset Z_{\check{v}}(3h/2) \cap \{\check{x}_n \geq \check{\gamma}(\check{x}_0) \} \Subset B_{1/9}
\end{equation}
where
\[
S_{\check{u}}(h) := \{ \check{x} \in \check{\C} : \check{u}(\check{x}) \leq h \}.
\]

Furthermore, using \eqref{eqn: C3 estimate}, we deduce that
\[
\mathcal{E}(h) \subset Z_{\check{v}}(h+ C_1(2C_1h)^{3/2})
\qquad\text{and}\qquad
Z_{\check{v}}(h) \subset \mathcal{E}(h+ C_1(2C_1h)^{3/2})
\]
where
\[
\mathcal{E}(h) := \bigg{\{} \check{x} : \frac{1}{2}D^2\check{v}(0)\check{x}\cdot\check{x} < h \bigg{\}} = AB_{(2h)^{1/2}}.
\]
Hence, \eqref{eqn: sections are comparable} implies that
\begin{equation}
\label{eqn: sublevel of u tilde comparable}
AB_{(h/7)^{1/2}} \cap \{\check{x}_n \geq \check{\gamma}(\check{x}_0) \} \subset S_{\check{u}}(h) \subset AB_{(7h)^{1/2}} \cap \{\check{x}_n \geq \check{\gamma}(\check{x}_0) \}
\end{equation}
if $\check{\delta}$ and $h$ are small enough.
In addition, arguing as in the proof of \cite[Theorem~4.3]{DF}, we see that
\begin{equation}
\label{eqn: image of sublevel of u tilde comparable}
A^{-1}B_{(h/7)^{1/2}} \cap \{\check{y}_n \geq \check{\zeta}(\check{y}_0) \} \subset \pa_\ast \check{u}(S_{\check{u}}(h)) \subset A^{-1}B_{(7h)^{1/2}} \cap \{\check{y}_n \geq \check{\zeta}(\check{y}_0) \},
\end{equation}
provided that $\check{\del}$ and $h$ are sufficiently small.
\\

{\it -- Case 1: $x^0 \in \{ x_n = \gamma(x_0) \}$, i.e., $\check{\gamma}(0) = 0$.}
\\

{\it -- Step 4.1: A change of variables.}\\

Recalling that $A$ is symmetric, $\det(A) = 1$, $C_1^{-1/2}\Id \leq A \leq C_1^{1/2}$, \eqref{eqn: symmetry Hessian at 0}, and \eqref{eqn: Hessian of tilde v at 0}, a simple computation shows that there exists a matrix $M$ such that
\[
\det(M) = 1,
\]
the matrix $M^{-1}A^{-1}$ is symmetric,
\[
(M^{-1}A^{-1})_{ni} = (M^{-1}A^{-1})_{in} = 0 \qquad \forall i = 1, \dots, n-1,
\]
and
\[
|M - \Id| \leq C_3\check{\del}.
\]
(In this case, the $\check{\del}$ factor comes from the H\"{o}lder semi-norm of the gradient of $\check{\gamma}$ only.)
Now, consider the change of variables
\[
\tilde{x} = \frac{1}{h^{1/2}}M^{-1}A^{-1} \check{x} \qquad\text{and}\qquad
\tilde{y} := \frac{1}{h^{1/2}}M^\ast A \check{y}.
\]
Let
\[
u_1(\tilde{x}) := \frac{1}{h}\check{u}(h^{1/2}A M \tilde{x})
\]
and set
\[
\C_1 := S_{u_1}(1) \qquad\text{and}\qquad \K_1 := \pa_\ast u_1(S_{u_1}(1)).
\]
As $\det(M)=\det(A)= 1$, we deduce that 
\[
(\nab u_1)_{\#}f_1 = g_1
\]
with
\[
f_1(\tilde{x}) := \check{f}(h^{1/2}AM\tilde{x})\1_{\C_1} \qquad\text{and}\qquad g_1(\tilde{y}) := \check{g}(h^{1/2}A^{-1}(M^\ast)^{-1}\tilde{y})\1_{\K_1}.
\]
Then, from \eqref{eqn: sublevel of u tilde comparable}, \eqref{eqn: image of sublevel of u tilde comparable}, and our estimate on $M$,
\[
B_{1/3} \cap \{ \tilde{x}_n \geq \gamma_1(\tilde{x}_0) \}  \subset \C_1 \subset B_3 \cap \{ \tilde{x}_n \geq \gamma_1(\tilde{x}_0) \}
\]
and
\[
B_{1/3} \cap \{ \tilde{y}_n \geq \zeta_1(\tilde{y}_0) \} \subset \K_1 \subset B_3 \cap \{ \tilde{y}_n \geq \zeta_1(\tilde{y}_0) \}.
\]
Additionally,
\[
\gamma_1(0) = 0, \qquad \frac{1}{C_4h^{1/2}}|\check{\zeta}(0)| \leq |\zeta_1(0)| \leq \frac{C_4}{h^{1/2}}|\check{\zeta}(0)|, \qquad \nab \gamma_1(0) = \nab\zeta_1(0) = 0,
\]
and
\[
[\nab \gamma_1]_{C^{0,\alpha}(\B_3)} + [\nab \zeta_1]_{C^{0,\alpha}(\B_3)} \leq c_0\check{\delta} =: \del_1
\]
for $c_0 < 1$, choosing $h$ and $\check{\delta}$ small enough so that $C_4h^{\alpha/2} \ll 1$.
\\

{\it -- Step 5.1: An iteration scheme.}\\

In order run Steps 2 through 4.1 on $u_1$, $\C_1$, $\K_1$, $\gamma_1$, $\zeta_1$, $f_1$, and $g_1$, we need to ensure two things: 1. that the hypotheses of Lemma~\ref{lem: cptness} and 2. that we can ensure the regularity of the convex potential $v_1$ we would produce after applying Lemma~\ref{lem: cptness}.

So long as $|\zeta_1(0)| \leq 1/27$, 1. is satisfied.

Now, let us move to understanding point 2.
By \eqref{eqn: intial poly approximation of tilde u}, 
\[
\bigg{\|} u_1(\tilde{x}) - \frac{1}{2}|M\tilde{x}|^2\bigg{\|}_{L^\infty(M^{-1}B_4 \cap \{ \tilde{x}_n \geq \gamma_1(\tilde{x}_0) \})} \leq \frac{1}{2}\check{\eta},
\]
from which, using our estimate on $M$, we find that
\begin{equation}
\label{eqn: diagonalized u close to parabola bdry}
\bigg{\|} u_1(\tilde{x}) - \frac{1}{2}|\tilde{x}|^2\bigg{\|}_{L^\infty(B_3 \cap \{ \tilde{x}_n \geq \gamma_1(\tilde{x}_0) \})} \leq \frac{1}{2}\check{\eta} + C_5\check{\delta} \leq \check{\eta}.
\end{equation}
Applying Lemma~\ref{lem: cptness} and arguing as before, \eqref{eqn: convexity for reg} becomes
\begin{equation}
\label{eqn: approx in iteration}
\bigg{\|}v_1(\tilde{x})-\frac{1}{2}|\tilde{x}|^2\bigg{\|}_{L^\infty(B_{1/6})} 
\leq C_0(\check{\eta} + \om(\check{\del}) + |\zeta_1(0)|).
\end{equation}

Let $\rho \leq 1/27$ is the largest replacement for $|\zeta_1(0)|$ in \eqref{eqn: approx in iteration} that permits \eqref{eqn: MA equation for v tilde} with $v_1$ replacing $\tilde{v}'$.
In order to continue with Step 2, we need that
\[
|\zeta_1(0)| \leq \rho.
\]
Notice that if we decrease $\check{\eta}$ (and then necessarily $\check{\delta}$), we can increase $\rho$.
In particular, we can ensure that 
\begin{equation}
\label{eqn: rho large}
\rho \gg \check{\eta}^{1/2} + \check{\delta}.
\end{equation}
So, provided that $\check{\eta},\check{\del}$, and $r$ are sufficiently small to guarantee 1., we can indeed continue and find $A_1$ and $M_1$ and define $u_2$, $\C_2$, $\K_2$, $\gamma_2$, $\zeta_2$, $f_2$, and $g_2$.
Notice that the only differences between this family and its predecessor is that $|\zeta_2(0)|$ will increase:
\[
\frac{1}{C_4h^{1/2}}|\zeta_1(0)| \leq |\zeta_2(0)| \leq \frac{C_4}{h^{1/2}}|\zeta_1(0)|
\]
and the H\"{o}lder semi-norm of the lower boundaries of $\C_2$ and $\K_2$ will decrease:
\[
[\nab \gamma_2]_{C^{0,\alpha}(\B_3)} + [\nab \zeta_2]_{C^{0,\alpha}(\B_3)} \leq c_0\del_1 = c_0^2\check{\delta}.
\]
Hence, if $|\zeta_2(0)| \leq \rho$, then we can repeat our procedure again and iterate further.

Suppose $\check{\zeta}(0) \neq 0$, i.e., $y^0 \notin \{ y_n = \zeta(y_0) \}$.
Then, there will be a first time $k \geq 2$ at which
\[
|\zeta_{k}(0)| > \rho,
\]
and we can no longer continue our iterative procedure.
That said, recalling how we proved \eqref{eqn: diagonalized u close to parabola bdry}, we have that
\[
\bigg{\|} u_k(\tilde{x}) - \frac{1}{2}|\tilde{x}|^2\bigg{\|}_{L^\infty(B_3 \cap \{ \tilde{x}_n \geq \gamma_k(\tilde{x}_0)\})} \leq \check{\eta}.
\]
Consequently,
\[
\pa u_k(\tilde{x}) \subset \bigg{\{} \tilde{y}_n \geq - 4\check{\delta} - \frac{5}{2}\check{\eta}^{1/2} \bigg{\}}
\]
for all $\tilde{x} \in \{\tilde{x}_n \geq \gamma_k(\tilde{x}_0) + \check{\eta}^{1/2}\} \cap \C_k$ (cf. \eqref{eqn: bar y - bar x estimate}).
On the other hand,
\[
|\C_k \setminus \{ \tilde{x}_n \geq \gamma_k(\tilde{x}_0) + \check{\eta}^{1/2} \}| \leq C\check{\eta}^{1/2}
\]
and, recalling \eqref{eqn: rho large},
\[
\bigg{|}\K_k \setminus \bigg{\{} y_n \geq - 4\check{\delta} - \frac{5}{2}\check{\eta}^{1/2} \bigg{\}}\bigg{|} \geq c\bigg{(}\rho - 8\check{\delta} - \frac{5}{2}\check{\eta}^{1/2}\bigg{)} > 0.
\]
But these two inequalities together violate the transport condition $(\nab u_k)_{\#}f_k = g_k$ if $\check{\delta}$ and $\check{\eta}$ are sufficiently small (again, recall \eqref{eqn: rho large}).

As $\check{\zeta}(0) = 0$ (that is, $y^0 \in \{ y_n = \zeta(y_0) \}$), we can iterate indefinitely.
In turn, for all $k \geq 1$, we have determinant one matrices $A_k$ and $M_k$ such that
\[
\frac{1}{C_4} \Id \leq A_kM_k \leq C_4 \Id
\]
and
\[
D_kB_{h^{k/2}/7^{1/2}} \cap \{ \tilde{x}_n \geq \gamma_1(\tilde{x}_0) \} \subset S_{u_1}(h^k) \subset D_kB_{7^{1/2}h^{k/2}} \cap \{ \tilde{x}_n \geq \gamma_1(\tilde{x}_0) \} 
\]
where $D_k := A_1M_1 \cdots A_{k-1}M_{k-1}A_k$.
Hence,
\[
B_{(h^{1/2}/3C_4)^k} \cap \{ \tilde{x}_n \geq \gamma_1(\tilde{x}_0) \} \subset S_{u_1}(h^k) \subset B_{(3C_4h^{1/2})^k} \cap \{ \tilde{x}_n \geq \gamma_1(\tilde{x}_0) \} \qquad \forall k \geq 1.
\]
Thus, fixing $\beta \in (0,1)$ and then choosing $h$ sufficiently small and $d := h^{1/2}/3C_4$, it follows that
\[
\|u_1\|_{L^\infty(B_{d^k} \cap \{ \tilde{x}_n \geq \gamma_1(\tilde{x}_0)\})} \leq d^{(1+\beta)k}.
\]
Since $x^0$ was an arbitrary point on $B_r \cap \{ x_n = \gamma(x_0)\}$, we have that $u \in C^{1,\beta}(B_r \cap \{ x_n = \gamma(x_0)\})$.
\\

{\it -- Case 2: $x^0 \in \{ x_n > \gamma(x_0)\}$.}
\\

{\it -- Step 4.2: A change of variables.}\\

Notice that in Case 1, we showed that
\[
\pa u(B_r \cap \{ x_n = \gamma(x_0) \}) = \nab u(B_r \cap \{ x_n = \gamma(x_0) \}) \subset B_{1/2} \cap \{ y_n = \zeta(y_0) \}.
\]
By duality, that is, considering inverse transport $\nab u^\ast$, it follows that 
\begin{equation}
\label{eqn: int to int}
\pa u(B_r \cap \{ x_n > \gamma(x_0) \}) \subset B_{1/2} \cap \{ y_n > \zeta(y_0) \}.
\end{equation}

Just as before, we find a determinant one matrix $M$ such that the matrix $M^{-1}A^{-1}$ is symmetric and has eigenvectors $\{e_1,\dots,e_{n-1},\e_n\}$ for which $\e_n \cdot e_i = 0$ for all $i = 1, \dots, n-1$.
However, now that $\check{\gamma}(0) \neq 0$, we find that
\begin{equation}
\label{eqn: M is close to Id}
|M - \Id| \leq C_3(|\check{\gamma}(0)| + \check{\delta}) < C_3(|\check{\gamma}(0)| + 2\check{\delta}) < C_3(\rho + \check{\delta}).
\end{equation}
If we define $\gamma_1$ and $\zeta_1$ as we did in Step 4.1, then 
\[
|\nab \zeta_1(0)| \leq C_6|\nab \check{\zeta}(0)|.
\]

There are two subcases two consider: 1. $C_6 \leq 1$ and 2. $C_6 > 1$.

In Subcase 1, we consider the same change of variables as we did in Step 4.1.
Then, from \eqref{eqn: sublevel of u tilde comparable}, \eqref{eqn: image of sublevel of u tilde comparable}, and construction,
\[
B_{1/3} \cap \{ \tilde{x}_n \geq \gamma_1(\tilde{x}_0) \}  \subset \C_1 \subset B_3 \cap \{ \tilde{x}_n \geq \gamma_1(\tilde{x}_0) \}
\]
and
\[
B_{1/3} \cap \{ \tilde{y}_n \geq \zeta_1(\tilde{y}_0) \} \subset \K_1 \subset B_3 \cap \{ \tilde{y}_n \geq \zeta_1(\tilde{y}_0) \}
\]
provided the right-hand side of \eqref{eqn: M is close to Id} is sufficiently small.
Additionally,
\[
\frac{1}{C_7h^{1/2}}|\check{\gamma}(0)| \leq |\gamma_1(0)| \leq \frac{C_7}{h^{1/2}}|\check{\gamma}(0)|, \qquad \frac{1}{C_7h^{1/2}}|\check{\zeta}(0)| \leq |\zeta_1(0)| \leq \frac{C_7}{h^{1/2}}|\check{\zeta}(0)|, 
\]
\[
\nab \gamma_1(0) = 0, \qquad |\nab \zeta_1(0)| \leq \check{\delta},
\]
and
\begin{equation}
\label{eqn: seminorm dec subcase 1}
[\nab \gamma_1]_{C^{0,\alpha}(\B_3)} + [\nab \zeta_1]_{C^{0,\alpha}(\B_3)} \leq c_1\check{\delta}
\end{equation}
for $c_1 \ll 1$, taking $h$ smaller.

In Subcase 2, we additionally apply a shearing transformation $L^\ast$ to swap which side has a horizontal tangent at $0$ for the function defining the lower boundary.
More precisely, there exists a shearing transformation $L$ so that 
\[
\nab \zeta_1(0) = 0 \qquad\text{and}\qquad |L - \Id| \leq C_6\check{\del},
\]
defining
\[
\tilde{y} := \frac{1}{h^{1/2}}L^\ast M^\ast A\check{y}
\]
and letting $\zeta_1 : \B_3 \to \R$ be such that
\[
\{ \tilde{y}_n \geq \zeta_1(\tilde{y}_0) \} = \frac{1}{h^{1/2}}L^\ast M^\ast A\{\check{y}_n \geq \check{\zeta}(\check{y}_0) \}.
\]
So, considering the change of variables
\[
\tilde{x} := \frac{1}{h^{1/2}}L^{-1}M^{-1}A^{-1} \check{x} \qquad\text{and}\qquad
\tilde{y} := \frac{1}{h^{1/2}}L^\ast M^\ast A \check{y},
\]
we define
\[
u_1(\tilde{x}) := \frac{1}{h}\check{u}(h^{1/2}A M L\tilde{x})
\]
and set
\[
\C_1 := S_{u_1}(1) \qquad\text{and}\qquad \K_1 := \pa_\ast u_1(S_{u_1}(1)).
\]
As $\det(L) = \det(M)=\det(A)= 1$, we deduce that 
\[
(\nab u_1)_{\#}f_1 = g_1
\]
with
\[
f_1(\tilde{x}) := \check{f}(h^{1/2}AML\tilde{x})\1_{\C_1} \qquad\text{and}\qquad g_1(\tilde{y}) := \check{g}(h^{1/2}A^{-1}(M^\ast)^{-1}(L^\ast)^{-1}\tilde{y})\1_{\K_1}.
\]

From \eqref{eqn: sublevel of u tilde comparable}, \eqref{eqn: image of sublevel of u tilde comparable}, and our estimates on $M$ and $L$,
\[
B_{1/3} \cap \{ \tilde{x}_n \geq \gamma_1(\tilde{x}_0) \}  \subset \C_1 \subset B_3 \cap \{ \tilde{x}_n \geq \gamma_1(\tilde{x}_0) \}
\]
and
\[
B_{1/3} \cap \{ \tilde{y}_n \geq \zeta_1(\tilde{y}_0) \} \subset \K_1 \subset B_3 \cap \{ \tilde{y}_n \geq \zeta_1(\tilde{y}_0) \}.
\]
Additionally,
\[
\frac{1}{C_8h^{1/2}}|\check{\gamma}(0)| \leq |\gamma_1(0)| \leq \frac{C_8}{h^{1/2}}|\check{\gamma}(0)|, \qquad \frac{1}{C_8h^{1/2}}|\check{\zeta}(0)| \leq |\zeta_1(0)| \leq \frac{C_8}{h^{1/2}}|\check{\zeta}(0)|, 
\]
\[
|\nab \gamma_1(0)| \leq \check{\delta}, \qquad \nab \zeta_1(0) = 0,
\]
and
\[
[\nab \gamma_1]_{C^{0,\alpha}(\B_3)} + [\nab \zeta_1]_{C^{0,\alpha}(\B_3)} \leq c_1\check{\delta}
\]
where for the inequality $|\nab \gamma_1(0)| \leq \check{\delta}$, we have used \eqref{eqn: seminorm dec subcase 1}.
\\

{\it -- Step 5.2: An iteration scheme.}\\

In order run Steps 2 through 4.2 on $u_1$, $\C_1$, $\K_1$, $\gamma_1$, $\zeta_1$, $f_1$, and $g_1$, like before, we need to ensure two things: 1. that the hypotheses of Lemma~\ref{lem: cptness} and 2. that we can ensure the regularity of the convex potential $v_1$ we would produce after applying Lemma~\ref{lem: cptness}.

So long as $|\gamma_1(0)|, |\zeta_1(0)| \leq 1/27$, 1. is satisfied.

Now, let us move to understanding point 2.
By \eqref{eqn: intial poly approximation of tilde u}, 
\[
\bigg{\|} u_1(\tilde{x}) - \frac{1}{2}|ML\tilde{x}|^2\bigg{\|}_{L^\infty(L^{-1}M^{-1}B_4 \cap \{ \tilde{x}_n \geq \gamma_1(\tilde{x}_0) \})} \leq \frac{1}{2}\check{\eta},
\]
from which, using our estimates on $M$ and $L$, we find that
\[
\bigg{\|} u_1(\tilde{x}) - \frac{1}{2}|\tilde{x}|^2\bigg{\|}_{L^\infty(B_3 \cap \{ \tilde{x}_n \geq \gamma_1(\tilde{x}_0) \})} \leq \check{\eta} + C_9|\check{\gamma}(0)|.
\]
Applying Lemma~\ref{lem: cptness} and arguing as before, \eqref{eqn: convexity for reg} becomes
\[
\begin{split}
\bigg{\|}v_1(\tilde{x})-\frac{1}{2}|\tilde{x}|^2\bigg{\|}_{L^\infty(B_{1/6})} 
&\leq C_0(\check{\eta} + \om(\check{\del}) + C_9|\check{\gamma}(0)| + |\gamma_1(0)|+ |\zeta_1(0)| + \check{\del})\\
&\leq \tilde{C}_0(\check{\eta} + \om(\check{\del}) + |\check{\gamma}(0)| + |\gamma_1(0)|+ |\zeta_1(0)|).
\end{split}
\]
In this case, we need
\[
|\check{\gamma}(0)| + |\gamma_1(0)|+ |\zeta_1(0)| \leq \rho
\]
to proceed, decreasing $\rho$ to account for the larger factor $\tilde{C}_0$.
Recall that  
\begin{equation}
\label{eqn: rho large 2}
\rho \gg \check{\eta}^{1/2} + \check{\del}.
\end{equation}
Hence, if $\check{\eta}, \check{\del}$, and $r$ are sufficiently small, we can indeed continue and find $A_1$ a symmetric, determinant one matrix such that
\[
\frac{1}{C_1^{1/2}} \Id \leq A_1 \leq C_1^{1/2} \Id,
\]
\[
A_1B_{(h/7)^{1/2}} \cap \{\tilde{x}_n \geq \gamma_1(\tilde{x}_0) \} \subset S_{u_1}(h) \subset A_1B_{(7h)^{1/2}} \cap \{\tilde{x}_n \geq \gamma_1(\tilde{x}_0) \},
\]
\[
A_1^{-1}B_{(h/7)^{1/2}} \cap \{\tilde{y}_n \geq \zeta_1(\tilde{y}_1) \} \subset \pa_\ast u_1(S_{u_1}(h)) \subset A_1^{-1}B_{(7h)^{1/2}} \cap \{\tilde{y}_n \geq \zeta_1(\tilde{y}_0) \},
\]
and
\[
\bigg{\|} u_1(\tilde{x}) - \frac{1}{2}|A_1^{-1}\tilde{x}|^2\bigg{\|}_{L^\infty(A_1B_{(16h)^{1/2}} \cap \{\tilde{x}_n \geq \gamma_1(\tilde{x}_0) \})}  + \bigg{\|} u_1^\ast(\tilde{y}) - \frac{1}{2}|A_1\tilde{y}|^2\bigg{\|}_{L^\infty(A_1^{-1}B_{(16h)^{1/2}} \cap \{\tilde{y}_n \geq \zeta_1(\tilde{y}_0) \})}
\leq \check{\eta}h.
\]
Using the same construction as before, we build $M_1$ and if needed $L_1$ (this time however $L_1$ will make the tangent plane at $0$ to lower boundary of the source horizontal and the tangent plane at $0$ to the lower boundary of the target smaller than $\check{\del}$) and define $u_2$, $\C_2$, $\K_2$, $\gamma_2$, $\zeta_2$, $f_2$, and $g_2$.
Now,
\[
|M_1 - \Id| \leq C_3(|\gamma_1(0)| + 2\check{\delta}) < C_3(\rho + \check{\delta}),
\]
\[
[\nab \gamma_2]_{C^{0,\alpha}(\B_3)} + [\nab \zeta_2]_{C^{0,\alpha}(\B_3)} \leq c_1^2\check{\delta},
\]
\[
\frac{1}{C_8h^{1/2}}|\gamma_1(0)| \leq |\gamma_2(0)| \leq \frac{C_8}{h^{1/2}}|\gamma_1(0)|, \qquad\text{and}\qquad \frac{1}{C_8h^{1/2}}|\zeta_1(0)| \leq |\zeta_2(0)| \leq \frac{C_8}{h^{1/2}}|\zeta_1(0)|.
\]
Continuing, there will be a first time $k \geq  2$ when
\[
2(|\zeta_k(0)| + |\gamma_k(0)|) \geq |\gamma_{k-1}(0)| + |\gamma_k(0)| + |\zeta_k(0)| > \rho.
\]
At this point, we go back to $u_{k-1}$ and consider 
\[
\tilde{u}(\tilde{x}) := \frac{1}{h}u_{k-1}(h^{1/2}A_{k-1}\tilde{x})
\]
(and, correspondingly, $\tilde{\C}$, $\tilde{\K}$, $\tilde{\gamma}$, and $\tilde{\zeta}$) rather than $u_{k}$, forgetting about $M_{k-1}$ and $L_{k-1}$.
Notice that
\[
B_{1/3} \cap \{ \tilde{x}_n \geq \tilde{\gamma}(\tilde{x}_0) \}  \subset \C_1 \subset B_3 \cap \{ \tilde{x}_n \geq \tilde{\gamma}(\tilde{x}_0) \},
\]
\[
B_{1/3} \cap \{ \tilde{y}_n \geq \tilde{\zeta}(\tilde{y}_0) \} \subset \K_1 \subset B_3 \cap \{ \tilde{y}_n \geq \tilde{\zeta}(\tilde{y}_0) \},
\]
and
\begin{equation}
\label{eqn: final close to parabola}
\bigg{\|} \tilde{u}(\tilde{x}) - \frac{1}{2}|\tilde{x}|^2\bigg{\|}_{L^\infty(B_3 \cap \{\tilde{x}_n \geq \tilde{\gamma}(\tilde{x}_0) \})} +  \bigg{\|} \tilde{u}^\ast(\tilde{y}) - \frac{1}{2}|\tilde{y}|^2\bigg{\|}_{L^\infty(B_3 \cap \{\tilde{y}_n \geq \tilde{\zeta}(\tilde{y}_0) \})}
\leq \check{\eta}.
\end{equation}
Moreover,
\[
C(|\tilde{\gamma}(0)| + |\tilde{\zeta}(0)|) \geq |\gamma_{k}(0)|+|\zeta_{k}(0)|.
\]
Hence, using \eqref{eqn: final close to parabola}, arguing as we did to prove \eqref{eqn: image of point under subdiff}, \eqref{eqn: int to int}, and recalling that \eqref{eqn: rho large 2}, we deduce that
\[
B_{c\rho} \subset \tilde{\C} \qquad\text{and}\qquad B_{c\rho} \subset \tilde{\K}.
\]
So, we are in an interior situation, and taking $\check{\eta}$ (and also $\check{\del}$) sufficiently small depending on $\rho$, we can apply the arguments of \cite[Theorem~4.3]{DF} to conclude that $u \in C^{1,\beta}(x^0)$.
As $x^0 \in B_r \cap \{ x_n > \gamma(x_0) \}$ was arbitrary, the theorem holds.
\end{proof}

\section{Perturbations in Non-regular Domains}

In the previous section, we considered perturbations in regions of domains that are at least $C^{1,\alpha}$, and, in the case of Theorem~\ref{thm: c1b at bdry}, we additionally considered non-constant densities.
The next natural question is, what can be said about domain and density perturbations of less regular portions of a domain?
We have seen, in some sense, that corners destroy regularity, and so this question is rather delicate.
Hence, we consider the simple situation of rectangles in $\R^2$, wherein one might hope to leverage the highly symmetric nature of these domains to say something. 

Interestingly, Theorem~\ref{thm: c1b at bdry} can be extended to domains in two dimensions that are deformation of domains with 90 degree corners.
The fundamental domain in Theorem~\ref{thm: c1b at bdry} is an upper half ball; in the sense given in its hypotheses, $\C$ and $\K$ are comparable to upper half balls.
If $\gamma, \zeta \equiv 0$, then the points on $\{ x_2 = \gamma(x_1) \}$ are interior points for the optimal transport for the data reflected over horizontal lines, and regularity follows from \cite[Proposition~2]{FK}.
In Theorem~\ref{thm: c1b at bdry}, the regularity of $u$ at/near the lower boundary is inherited from the interior regularity of a potential to an approximating problem that takes advantage of this ``reflection symmetry yields regularity'' argument.
The same strategy will extend Theorem~\ref{thm: c1b at bdry} when considering an upper quarter ball as a fundamental domain.
Just as points on flat boundaries can be turned into interior points, 90 degree corners and points near these corners can be turn into interior points.
(See, e.g., Lemma~\ref{lem: C1a up to bdry Q}.)
We leave the details of Theorem~\ref{thm: c1b at bdry}'s extension to the interested reader.

Now let us move to considering higher order density perturbations in corners: given two densities $f, g \in C^{\infty}(\overline{Q})$ bounded away from zero and satisfying the mass balance condition $\|f\|_{L^1(Q)} = \|g\|_{L^1(Q)}$ where $Q := (0,1) \times (0,1)$, is the optimal transport $T$ taking $f$ to $g$ of class $C^{\infty}(\overline{Q})$?

Set
\[
\Up_{\rm b} := (0,1) \times \{0\},\quad \Up_{\rm t} :=  (0,1) \times \{1\},\quad \Up_{\rm l} := \{0\} \times (0,1), \quad\text{and}\quad \Up_{\rm r} := \{1\} \times (0,1).
\]
In what follows, we let $C$ be a generic positive constant; it may change from line to line, and its dependences, if any, will either be clear from context or explicitly given.

First, notice that $T \in C^{\infty}_{\rm loc}(Q) \cap C^{0,\sigma}(\overline{Q})$ for some $\sigma < 1$ (see \cite{C1,C2}).
So, to start, we show, taking advantage of the reflection symmetries of $Q$, that the non-uniform convexity of $Q$ does not prohibit $C^{2,\alpha}$-regularity given $\alpha$-H\"{o}lder continuous densities on $\overline{Q}$ bounded away from zero.
In other words, the symmetries of $Q$ allow us to recover the same regularity up to the boundary of our transport as we would had our source and target domains been uniformly convex. 
(See \cite{C3}.)

\begin{lemma}
\label{lem: C1a up to bdry Q}
Let $f, g \in C^{0,\a}(\overline{Q})$ be bounded away from zero and satisfy the mass balance condition $\|f\|_{L^1(Q)} = \|g\|_{L^1(Q)}$.
The optimal transport $T$ is a diffeomorphism of class $C^{1,\a}(\overline{Q})$.
Moreover, $T$ maps each segment of the boundary of $Q$ diffeomorphically to itself.
\end{lemma}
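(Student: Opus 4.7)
\medskip
\noindent\textbf{Proof proposal.} The plan is to exploit the reflection symmetries of $Q$ to turn boundary points into interior points for an enlarged, but still convex, problem. By Brenier's theorem there is a strictly convex potential $u$ with $T = \nab u$ and, since $Q$ is convex with densities bounded away from $0$ and $\infty$, Caffarelli's theory yields $\nab u \in C^{0,\sigma}(\overline{Q})$ for some $\sigma \in (0,1)$ and $u \in C^{2,\alpha}_{\rm loc}(Q)$ solving $\det(D^2 u) = f/(g \circ \nab u)$. Call this Step 1. The rest of the argument upgrades interior regularity to boundary regularity.

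Step 2 (boundary is invariant). I would double $f, g$ by even reflection across $\Up_{\rm b}$ to obtain $\tilde{f}, \tilde{g} \in C^{0,\alpha}(\overline{\tilde{Q}})$ on the convex rectangle $\tilde{Q} := (0,1) \times (-1,1)$, balanced in mass and still bounded away from $0$ and $\infty$. Let $S = \nab w$ be the Brenier transport from $\tilde{f}$ to $\tilde{g}$. Since $\tilde{f}$, $\tilde{g}$, and the quadratic cost are invariant under the reflection $R : (x_1, x_2) \mapsto (x_1, -x_2)$, Brenier uniqueness forces $S(Rx) = R S(x)$, and the upper/lower mass balance forces $S(Q) \subset Q$. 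Thus $S|_Q$ is a Brenier transport from $f$ to $g$, so by uniqueness $S|_Q = T$ and $w|_Q = u$ up to an additive constant. Caffarelli's interior $C^1$ theorem on the convex set $\tilde{Q}$ gives $w \in C^1(\tilde{Q})$; combined with the relation $w(Rx) = w(x) + c$, this forces $\pa_2 w \equiv 0$ on $\{x_2 = 0\} \cap \tilde{Q}$, i.e., $T_2 \equiv 0$ on $\Up_{\rm b}$, so $T(\Up_{\rm b}) \subset \overline{\Up_{\rm b}}$. Running the same reflection argument across each of the other three sides (each producing a convex doubled rectangle) yields $T(\Up_{\rm s}) \subset \overline{\Up_{\rm s}}$ for $s \in \{ {\rm b}, {\rm t}, {\rm l}, {\rm r}\}$.

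Step 3 ($C^{1,\alpha}$ up to the interior of each side). Returning to the doubled problem across $\Up_{\rm b}$, the extended densities $\tilde{f}, \tilde{g}$ lie in $C^{0,\alpha}(\overline{\tilde{Q}})$ because even reflection preserves $\alpha$-H\"{o}lder continuity, and they are bounded away from zero. Caffarelli's interior $C^{2,\alpha}$ theory then gives $w \in C^{2,\alpha}_{\rm loc}(\tilde{Q})$, and in particular $\nab u \in C^{1,\alpha}$ at every interior point of $\Up_{\rm b}$; the analogous conclusion holds at interior points of the other three sides. For the corner points, say $(0,0)$, I would perform a \emph{double} reflection, extending $f, g$ to $\tilde{\tilde{f}}, \tilde{\tilde{g}}$ on $(-1,1)^2$ by $\tilde{\tilde{f}}(x_1, x_2) := f(|x_1|, |x_2|)$. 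Brenier's theorem and uniqueness under the $\mathbb{Z}_2 \times \mathbb{Z}_2$ symmetry (together with Step 2 to ensure the correct gluing along the two axes) identify the Brenier potential $\tilde{\tilde{u}}$ of this doubly-reflected problem with $u$ on $Q$. Since $(-1,1)^2$ is convex and the extended densities are $C^{0,\alpha}$ and bounded away from zero, interior $C^{2,\alpha}$ regularity applies at $(0,0)$, which is now an interior point, giving $\nab u \in C^{1,\alpha}$ at the corner; the other three corners are identical.

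Step 4 (diffeomorphism and boundary behavior). Combining the three regimes, $T \in C^{1,\alpha}(\overline{Q})$. The matrix $DT = D^2 u$ is strictly positive definite on $\overline{Q}$ since $\det(D^2 u) = f/g$ is bounded away from zero and $u$ is convex, so $T$ is a local diffeomorphism up to the boundary. Strict convexity of $u$ gives injectivity, and running the same argument with the Legendre transform $u^\ast$ (symmetric in source and target) gives surjectivity; each side $\Up_{\rm s}$ maps to itself by Step 2, and invertibility of $DT$ along the side makes this map a $C^{1,\alpha}$ diffeomorphism of the segment. The main obstacle is Step 2: one must set up the reflection so that the doubled Brenier potential $w$ is identifiable with the reflection of $u$ and simultaneously falls within the scope of Caffarelli's interior $C^1$ theorem; the key is that the doubled domain remains convex, while the boundary invariance is then forced by the symmetry $w \circ R = w + c$.
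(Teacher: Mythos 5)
Your proposal is correct and takes essentially the same approach as the paper: reflect the densities to turn boundary points into interior points of a larger convex problem, invoke Caffarelli's interior $C^{1,\alpha}$ regularity, and identify the reflected potential with $u$ via Brenier uniqueness and symmetry. The only real difference is a small redundancy on your part: the paper performs only the four-fold (corner) reflections around $(0,0)$, $(1,0)$, $(0,1)$, $(1,1)$, since the resulting balls $B_{3/4}$ at each corner, together with interior regularity, already cover $\overline{Q}$ and already force the axes (hence sides) to be preserved, so the separate single-side reflections in your Steps 2--3 are not needed.
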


\begin{proof}
First, by \cite{C2}, $T : \overline{Q} \to \overline{Q}$ is a bi-H\"{o}lder continuous homeomorphism.
Now, set 
\[
Q'' := (-1,1) \times (-1,1)
\] 
and let $f''$ be the even reflection of $f$ around the origin to $Q''$:
\[
f''(x) :=
\begin{cases}
f(x_1,x_2) &\text{in } [0,1] \times [0,1]\\
f(x_1,-x_2) &\text{in } [0,1]\times(0,-1]\\
f(-x_1,x_2) &\text{in } [-1,0)\times[0,1]\\
f(-x_1,-x_2) &\text{in } [-1,0)\times(0,-1].
\end{cases}
\]
Also, let $g''$ be the even reflection of $g$ around the origin to $Q''$.
By construction, $f''$ and $g''$ are of class $C^{0,\a}(Q'')$; and so, $T'' \in C^{1,\a}(B_{3/4})$, by \cite{C1}, where $T''$ is the optimal transport taking $f''$ to $g''$. 
By symmetry, $T''([-1,1] \times \{0\}) = [-1,1] \times \{0\}$, $T''(\{0\} \times [-1,1]) = \{0\} \times [-1,1]$, and the restriction of $T''$ to $\overline{Q}$ is $T$, the optimal transport taking $f$ to $g$.
It follows that $T \in C^{1,\a}(B_{3/4} \cap Q)$.
The lemma then follows after similarly reflecting $f$ and $g$ evenly around the points $(1,0), (0,1)$, and $(1,1)$.
\end{proof}

\begin{remark}
We can also see that $T = \nab u$ maps each segment of the boundary of $Q$ to itself and fixes the corners of $Q$ via a local argument. 
If, say, $\nab u(x) \in \Up_{\rm r}$ for some $x \in \Up_{\rm b}$, then $\nab u(\ell_x) \subset \overline{\Up}_{\rm r}$ for some $\ell_x$ non-empty subsegment of $\Up_{\rm b}$ containing $x$ (possibly as an endpoint).
Thus, $\pa_1 u(\cdot,0)$ is constant on $\ell_x$, or, equivalently,
$u|_{\ell_x}$ is linear.
However, this contradicts the strict convexity of $u$ along the boundary of $Q$ given by \cite{C2}.
Finally, $\nab u(x) \notin \Up_{\rm t}$ for any $x \in \Up_{\rm b}$ by the monotonicity of $\nab u$.
\end{remark}

As we are working in two dimensions, rather than consider a Monge-Amp\`{e}re equation, we can instead consider a quasi-linear, uniformly elliptic equation for the partial Legendre transform $u^\star$ of $u$; and after absorbing the coefficients' dependences on $u^\star$ at the expense of their regularity, we can consider a linear, uniformly elliptic equation.
This observation will play a key role in answering our question.

\begin{theorem}
\label{thm: C2a up to bdry Q}
Let $f, g \in C^{1,\a}(\overline{Q})$ be bounded away from zero and satisfy the mass balance condition $\|f\|_{L^1(Q)} = \|g\|_{L^1(Q)}$.
The optimal transport $T$ is a diffeomorphism of class $C^{2,\a}(\overline{Q})$.
Moreover, $T$ maps each segment of the boundary of $Q$ diffeomorphically to itself.
\end{theorem}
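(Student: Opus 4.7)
The plan is to upgrade the $C^{1,\alpha}$ conclusion of Lemma~\ref{lem: C1a up to bdry Q} via the partial Legendre transform of $u$ in $x_1$, which, as the paper anticipates, turns the Monge-Amp\`ere equation into a linear uniformly elliptic equation for $u^\star$ on the same square $Q$. The subtle step will be the corner analysis, which I handle by passing to the derivative equation and reflecting across the Neumann sides.

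By Lemma~\ref{lem: C1a up to bdry Q}, $T = \nab u \in C^{1,\alpha}(\overline{Q})$, with $T$ sending each open side of $Q$ diffeomorphically onto itself and fixing the four corners; in particular $\partial_1 u = 0, 1$ on the left and right sides and $\partial_2 u = 0, 1$ on the bottom and top. Writing $p := \partial_1 u(x_1, q)$, $q := x_2$, and $u^\star(p, q) := x_1 p - u(x_1, q)$, the map $(x_1, x_2) \mapsto (p, q)$ is a $C^{1,\alpha}$-diffeomorphism of $\overline{Q}$ onto $\overline{Q}$. A direct computation gives $\partial_p u^\star = x_1$, $\partial_q u^\star = -\partial_2 u$, and transforms the Monge-Amp\`ere equation $\det(D^2 u) = f/g(\nab u)$ into
\[
\partial_{qq} u^\star + c(p, q)\, \partial_{pp} u^\star = 0 \quad \text{in } Q, \qquad c(p, q) := \frac{f(\partial_p u^\star, q)}{g(p, -\partial_q u^\star)},
\]
subject to the affine Neumann conditions $\partial_p u^\star = p$ and $\partial_q u^\star = -q$ on $\partial Q$. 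Absorbing the solution-dependence of $c$ and using $f, g \in C^{1,\alpha}(\overline{Q})$ together with $\nab u^\star \in C^{1,\alpha}(\overline{Q})$, I obtain $c \in C^{1,\alpha}(\overline{Q})$ with $c \geq c_0 > 0$, so the equation is linear and uniformly elliptic with $C^{1,\alpha}$ coefficients.

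Setting $v := \partial_p u^\star$ and differentiating the PDE in $p$ produces a linear uniformly elliptic equation for $v$ on $Q$ with $C^{0,\alpha}$ coefficients and $C^{0,\alpha}$ right-hand side, the Dirichlet data $v = 0$ on $\{p = 0\}$ and $v = 1$ on $\{p = 1\}$, and the homogeneous Neumann condition $\partial_q v = 0$ on $\{q = 0\} \cup \{q = 1\}$ (the latter from differentiating the constant values of $\partial_q u^\star$ on top and bottom in $p$). Away from corners, boundary Schauder gives $v \in C^{2,\alpha}$ up to each smooth portion of $\partial Q$. To handle, say, the corner $(0, 0)$, I would reflect $v$ evenly across $\{q = 0\}$: the extension $\tilde v(p, q) := v(p, |q|)$ is $C^{1,\alpha}$ across this side and solves a linear uniformly elliptic equation on $(0, 1) \times (-1, 1)$ with $C^{0,\alpha}$ coefficients (even reflection preserves $C^{0,\alpha}$). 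The side $\{p = 0\} \times (-1, 1)$ of the extended domain is smooth and carries the constant Dirichlet datum $\tilde v = 0$, so boundary Schauder yields $\tilde v \in C^{2,\alpha}$ up to it; in particular $v \in C^{2,\alpha}$ near $(0, 0)$. Repeating this at the other three corners gives $v \in C^{2,\alpha}(\overline{Q})$. The symmetric argument applied to $w := \partial_q u^\star$, which has Dirichlet data on top and bottom and homogeneous Neumann data on left and right, gives $w \in C^{2,\alpha}(\overline{Q})$.

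Combining the two estimates, $\nab u^\star \in C^{2,\alpha}(\overline{Q})$, so $u^\star \in C^{3,\alpha}(\overline{Q})$. Inverting the partial Legendre transform, which is a $C^{1,\alpha}$-diffeomorphism with non-degenerate Jacobian by the uniform ellipticity inherited from $\l \leq f/g \leq 1/\l$, recovers $u \in C^{3,\alpha}(\overline{Q})$, and hence $T = \nab u \in C^{2,\alpha}(\overline{Q})$; the boundary-to-boundary mapping property is inherited from Lemma~\ref{lem: C1a up to bdry Q}. The main obstacle is the corner analysis: classical Schauder theory does not directly handle right-angled corners with mixed data, but the partial Legendre transform arranges the boundary conditions for $u^\star$ so cleanly that, after differentiating once, a single even reflection across the Neumann side at each corner converts the corner into an interior point of a smooth Dirichlet side, where standard Schauder closes the estimate.
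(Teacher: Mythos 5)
Your argument is correct, but it takes a genuinely different route from the paper's. The paper proves a standalone Schauder estimate in the corner (Proposition~\ref{prop: schauder}) by a Campanato-type iteration: it works directly with $u^\star$, reflects evenly around the origin (both across $\{p=0\}$ and across $\{q=0\}$) to extract a harmonic approximation, and then builds degree-three ``approximating polynomials'' compatible with the two Neumann conditions, using the Taylor expansion of the $C^{1,\alpha}$ coefficients to correct the polynomial at each scale. You instead differentiate the uniformly elliptic equation for $u^\star$ once in $p$ (and, by symmetry, in $q$), which trades the Neumann condition on two opposite sides for constant Dirichlet data; the remaining Neumann pair on $\{q=0\}\cup\{q=1\}$ is then removed by a single even reflection, converting each corner into an interior point of a flat Dirichlet boundary where classical boundary Schauder (with $C^{0,\alpha}$ coefficients and $C^{0,\alpha}$ first-order term) directly applies. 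Both routes hinge on the same ingredients\,---\,Lemma~\ref{lem: C1a up to bdry Q} to get $u^\star \in C^{2,\alpha}$ as the starting point, $f,g \in C^{1,\alpha}$ to control the derivative of the coefficient $c$, and the reflection principle of the type in \cite[Proposition~4.1]{MS}\,---\,and both stop at $C^{3,\alpha}$ for essentially the same reason. What your approach buys is economy: it reduces the corner problem to off-the-shelf Dirichlet boundary Schauder after one differentiation, avoiding the bespoke polynomial bookkeeping of Lemma~\ref{lem: iteration}. What the paper's approach buys is transparency of the higher-order obstruction: the explicit polynomial compatibility system \eqref{eqn: coeff} is exactly what becomes overdetermined one order up, which is the structural observation driving Theorem~\ref{thm: counter ex to C3 Q}. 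One small caveat on wording: the reflected extension $\tilde v(p,q)=v(p,|q|)$ is a priori only $C^{1,\alpha}$ across $\{q=0\}$, so one should invoke the reflection principle (or interior $W^{2,p}$ regularity across the flat interface) to see that $\tilde v$ is a genuine solution near $\{q=0\}$ before applying boundary Schauder at $\{p=0\}$; this is standard, but worth stating since the equation is second order.
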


\begin{proof}
By Lemma~\ref{lem: C1a up to bdry Q}, given any $f$ and $g$ of class $C^{1,\a}(\overline{Q})$ bounded away from zero, any convex potential $u$ of the optimal transport $T$ taking $f$ to $g$ is $C^{2,\a}(\overline{Q})$.
Moreover, because $T$ maps each segment of the boundary of $Q$ to itself, we see that $\pa_\nu u = 0$ on $\Up_{\rm b} \cup \Up_{\rm l}$ and $\pa_\nu u = 1$ on $\Up_{\rm t} \cup \Up_{\rm r}$.
In particular, $\nab u(0) = 0$.
Hence,
\begin{equation}
\label{eqn: neumann MA}
\begin{cases}
\det D^2 u = f/g(\nab u) &\text{in }Q\\
\pa_\nu u = 0 &\text{on } \Up_{\rm b} \cup \Up_{\rm l}\\
\pa_\nu u = 1 &\text{on } \Up_{\rm t} \cup \Up_{\rm r}.
\end{cases}
\end{equation}
Furthermore, $u$ is uniformly convex as $f/g > 0$.
In other words,
\begin{equation}
\label{eqn: strong convexity}
0 < \frac{1}{C} \leq \pa_{ii}u(x) \leq C \qquad\forall x \in \overline{Q}.
\end{equation}

Now, let $u^\star$ be the partial Legendre transform of $u$ in the $\e_1$-direction:
\[
u^\star(p,x_2) := \sup_{x_1 \in \overline{Q}_{x_2}} \{ px_1 - u(x_1,x_2)\}.
\]
Here, $\overline{Q}_{x_2}$ is the horizontal slice of $\overline{Q}$ at height $x_2$.
Notice that the point $x_1 = X_1(p,x_2)$ where this supremum is attained is characterized by the equation
\begin{equation}
\label{eqn: plt eqn}
\pa_1 u(X_1(p,x_2),x_2) = p.
\end{equation}
Since $u(\cdot,x_2)$ is strictly convex and of class $C^1(\overline{Q}_{x_2})$, we have that $\pa_1 u(\cdot,x_2)$ is injective and \eqref{eqn: plt eqn} is uniquely solvable given a pair $(p,x_2) \in \overline{Q}$.
The map $(x_1,x_2) \mapsto (\pa_1u(x_1,x_2),x_2)$ takes $\overline{Q}$ to $\overline{Q}$ as $T = \nab u$ maps $\overline{Q}$ to $\overline{Q}$.
Recall that the first partial derivatives of $u^\star$ are related to the first partial derivatives of $u$ by the equations
\[
\pa_1 u^\star(p,x_2) = X_1(p,x_2) \qquad\text{and}\qquad \pa_2 u^\star(p,x_2) = -\pa_2 u(X_1(p,x_2),x_2),
\]
while the pure second partial derivatives of $u^\star$ are related to the pure second partial derivatives of $u$ by the equations
\[
\pa_{11}u^\star(p,x_2) = \frac{1}{\pa_{11}u(X_1(p,x_2),x_2)} \qquad\text{and}\qquad \pa_{22}u^\star(p,y) = \bigg[\frac{(\pa_{12}u)^2}{\pa_{11}u} - \pa_{22}u\bigg](X_1(p,x_2),x_2).
\]
Therefore, using \eqref{eqn: neumann MA}, \eqref{eqn: strong convexity}, and \eqref{eqn: plt eqn}, it follows that
\begin{equation}
\label{eqn: pLt of neumann MA}
\begin{cases}
f(\pa_1u^\star,x_2)\pa_{11}u^\star + g(p,-\pa_2 u^\star)\pa_{22}u^\star = 0 &\text{in }Q\\
\pa_\nu u^\star = 0 &\text{on } \Up_{\rm b} \cup \Up_{\rm l}\\
\pa_\nu u^\star = 1 &\text{on } \Up_{\rm t} \cup \Up_{\rm r}.
\end{cases}
\end{equation}
Applying Proposition~\ref{prop: schauder} to $v = u^\star$, $a_1 = a_1(p,x_2) = f(\pa_1u^\star(p,x_2),x_2)$, and $a_2 = a_2(p,x_2) = g(p,-\pa_2 u^\star(p,x_2))$, we see that $u^\star$ is of class $C^{3,\a}(\overline{Q}_{3/4})$.
By symmetry, we can treat each corner as the origin; it follows that $\|u^\star\|_{C^{3,\a}(\overline{Q})} \leq C$.

Finally, as $u$ is uniformly convex (\eqref{eqn: strong convexity}), $u$ and $u^\star$ have the same regularity on the closure of $Q$.
\end{proof}

Let $Q_r : = (0,r) \times (0,r) = rQ$.

\begin{proposition}
\label{prop: schauder}
Let $v \in C(\overline{Q})$ be such that $\|v\|_{L^\infty(\overline{Q})} \leq 1$ and
\[
\begin{cases}
\trace(AD^2 v) = 0 &\text{in } Q \\
\pa_\nu v = 0 &\text{on } \Up_{\rm b} \cup \Up_{\rm l}
\end{cases}
\]
where
\[
\l \Id \leq A := \diag(a_1,a_2) \leq \L \Id.
\]
If $\|A\|_{C^{1,\a}(\overline{Q})} \leq \L$, then 
\[
\|v\|_{C^{3,\a}(\overline{Q}_{3/4})} \leq C
\]
for some constant $C = C(\l,\L,\a) > 0$.
\end{proposition}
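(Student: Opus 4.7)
The plan is to split $\overline{Q}_{3/4}$ into three regions and apply different techniques in each. At interior points of $Q$, classical Schauder estimates for non-divergence-form linear elliptic equations with $C^{1,\a}$ coefficients give local $C^{3,\a}$ regularity of $v$ at once. At points lying on the flat portions $\Up_{\rm b} \cup \Up_{\rm l}$ that are away from the corner $(0,0)$, I would invoke the standard boundary Schauder estimate for the Neumann problem with $C^{1,\a}$ coefficients on a flat piece of boundary; the vanishing Neumann data causes no compatibility issue.

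The real work is at the corner $(0,0)$, where one can exploit the right-angle Neumann-Neumann boundary condition together with the diagonal structure of $A$. The idea is to reflect $v$ evenly across both $\{x_1 = 0\}$ and $\{x_2 = 0\}$, defining $\tilde v(x) := v(|x_1|,|x_2|)$ in a neighborhood of the origin. Tangentially differentiating the Neumann conditions $\pa_1 v = 0$ on $\Up_{\rm l}$ and $\pa_2 v = 0$ on $\Up_{\rm b}$ yields $\pa_{12} v = 0$ on $\Up_{\rm l} \cup \Up_{\rm b}$; this is exactly what is needed to make $\pa_{12} \tilde v$ continuous across the axes, and hence $\tilde v$ is genuinely $C^2$ across them. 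The extension satisfies $\tilde a_1 \pa_{11} \tilde v + \tilde a_2 \pa_{22} \tilde v = 0$ in a small ball around the origin, where $\tilde a_i(x) := a_i(|x_1|,|x_2|)$ is Lipschitz (with possible jumps in first derivatives across the axes). Interior Schauder with $C^{0,\a}$ coefficients then gives $\tilde v \in C^{2,\a}_{\rm loc}$, so $v \in C^{2,\a}(\overline{Q}_{3/4})$.

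To close the gap from $C^{2,\a}$ to $C^{3,\a}$ at the corner, the plan is to run a freezing-coefficients (Campanato-type) iteration. After a diagonal anisotropic rescaling that normalizes $A(0,0)$ to the identity, the frozen model problem becomes $\Del V = 0$ in a neighborhood of the origin with homogeneous Neumann data on the two perpendicular coordinate axes; even-even reflection extends $V$ to a harmonic function on a full ball, which is therefore smooth. Because $A \in C^{1,\a}$, the difference between the true PDE and its first-order polynomial approximation at the origin is of order $|x|^{1+\a}$, so a standard polynomial-fitting argument at geometric scales yields $v \in C^{3,\a}$ at the corner with the desired quantitative estimate.

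The main obstacle is this final corner step: the reflected equation has coefficients that are only Lipschitz, so a naive Schauder argument produces $C^{2,\a}$ but not $C^{3,\a}$ regularity. The freezing-coefficients perturbation against the reflected Laplacian model resolves this, and succeeds precisely because the $90^\circ$ Neumann-Neumann corner is smooth after even-even reflection.
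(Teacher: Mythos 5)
Your proposal matches the paper's approach at its core: a Campanato-type polynomial-fitting iteration at the corner against a frozen model problem, using even--even reflection to recognize that model as harmonic on a full ball (hence smooth), with the $C^{1,\alpha}$ regularity of $A$ feeding in through first-order Taylor expansion of the coefficients. The paper does not separate $\overline{Q}_{3/4}$ into three regions; it runs the polynomial-fitting argument at the origin and remarks that it can be repeated at every point, and it goes directly to $C^{3,\alpha}$ rather than passing through a preliminary $C^{2,\alpha}$ step as you suggest. One detail worth noting, which you gesture at but do not spell out: the family of degree-three ``approximating'' polynomials one fits at each scale must satisfy two constraints simultaneously --- the corner Neumann conditions ($\pa_1 P(0,\cdot) = \pa_2 P(\cdot,0) \equiv 0$) and the vanishing of the degree-one part of $\trace(A D^2 P)$ after Taylor-expanding $A$ at the base point --- and one must verify that the resulting linear system for the polynomial coefficients is solvable and that the correction to the harmonic Taylor polynomial of the blow-up limit is small; that solvability is exactly the content of the paper's system \eqref{eqn: coeff} and the fix-up $P_0 \mapsto \bar P_0$ inside Lemma~\ref{lem: iteration}.
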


\begin{proof}
Up to a diagonal transformation, we can assume that $a_i(0) = 1$ for $i = 1,2$.
Then, up to zeroth order scaling, i.e., considering $v_r(x) := v(rx)$ and $A_r(x) := A(rx)$, we can assume that 
\[
\|A - \Id\|_{C^{1,\a}(\overline{Q})} \leq \vep
\]
for some $\vep > 0$ that will be chosen.

Let $P$ be a degree three polynomial such that
\[
\pa_1 P(0,\cdot) = \pa_2 P(\cdot,0) \equiv 0.
\]
Then, $P$ takes the form
\[
P(x) = p_0 + p_{2,1}x_1^2 + p_{2,2}x_2^2 + p_{3,1}x_1^3 + p_{3,2}x_2^3.
\]
Set
\[
\|P\| := \max_{m,j} |p_{m,j}|.
\]
Taylor expanding $A$ around the origin, we see that
\[
\trace(AD^2P) = \hat{P} + h 
\]
where $\hat{P}$ is a degree one polynomial with coefficients
\begin{equation}
\label{eqn: coeff}
\begin{cases}
d_0 \hspace{-0.25cm}&= 2p_{2,1} + 2p_{2,2}\\
d_{1,1} \hspace{-0.25cm}&= 6p_{3,1} + 2p_{2,1}\pa_1 a_1(0) + 2p_{2,2}\pa_1 a_2(0)\\
d_{1,2} \hspace{-0.25cm}&= 6p_{3,2} + 2p_{2,1}\pa_2 a_1(0) + 2p_{2,2}\pa_2 a_2(0)
\end{cases}
\end{equation}
and $h$ is such that
\begin{equation}
\label{eqn: conditions of remainder}
|h(x)| \leq  C\vep|x|^{1+\a}
\end{equation}
with $C = C(\|P\|) > 0$.
Notice that given any triplet $(d_0,d_{1,1},d_{1,2})$, the system \eqref{eqn: coeff} is uniquely solvable after choosing $p_{2,1}$.
Indeed,
\[
\begin{cases}
2p_{2,2} = d_0-2p_{2,1}\\
6p_{3,1} = d_{1,1} - (d_0-2p_{2,1})\pa_1 a_2(0) - 2p_{2,1}\pa_1 a_1(0)\\
6p_{3,2} = d_{1,2} - (d_0-2p_{2,1})\pa_2 a_2(0) - 2p_{2,1}\pa_2 a_1(0).
\end{cases}
\]

\begin{definition}
We call a degree three polynomial $P$ approximating for $v$ at zero if $\pa_1 P(0,\cdot) = \pa_2 P(\cdot,0) \equiv 0$ and $\hat{P} \equiv 0$.
\end{definition}

Recall $Q'' = (-1,1) \times (-1,1)$, and let $Q''_r := rQ''$.

\begin{lemma}
\label{lem: iteration}
Assume that for some $r \leq 1$ and some approximating polynomial $P$ for $v$ at zero with $\|P\| \leq 1$, we have that
\[
\|v-P\|_{L^\infty(\overline{Q}_r)} \leq r^{3+\a}.
\]
Then, there exists an approximating polynomial $\bar{P}$ for $v$ at zero such that
\[
\|v-\bar{P}\|_{L^\infty(\overline{Q}_{\rho r})} \leq (\rho r)^{3+\a}
\]
and
\[
\|P -\bar{P}\|_{L^\infty(Q''_r)} \leq Cr^{3+\a}
\]
for some constants $\rho = \rho(\l,\L,\a), C = C(\l,\L,\a) > 0$.
\end{lemma}

\begin{proof}
Let
\[
\tilde{v}(x) := \frac{[v-P](rx)}{r^{3+\a}}
\qquad\text{and}\qquad \tilde{A}(x) := A(rx).
\]
Then,
\[
\|\tilde{v}\|_{L^\infty(\overline{Q})} \leq 1.
\]
Since $P$ is an approximating polynomial for $v$ at zero, we have that
\[
\begin{cases}
\trace(\tilde{A}D^2\tilde{v}) = \tilde{h} &\text{in } Q\\
\pa_\nu \tilde{v} = 0 &\text{on }\Up_{\rm b} \cup \Up_{\rm l},
\end{cases}
\]
and from \eqref{eqn: conditions of remainder},
\[ 
|\tilde{h}| \leq C\vep.
\]

Now, consider the even reflections of $\tilde{A}, \tilde{v}$, and $\tilde{h}$ around the origin, which we denote by $\tilde{A}'', \tilde{v}''$, and $\tilde{h}''$.
It follows that (see, e.g., \cite[Proposition~4.1]{MS}),
\[
\trace(\tilde{A}'' D^2\tilde{v}'') = \tilde{h}'' \quad\text{in}\quad Q''.
\]
Since $\tilde{A}''$ are uniformly H\"{o}lder continuous and $\tilde{v}''$ and $\tilde{h}''$ are uniformly bounded in $Q''$, we deduce that $\tilde{v}''$ are locally uniformly H\"{o}lder continuous in $Q''$.
(Recall all these functions depend on our choice of $\vep$.)
By compactness, as $\vep$ converges to zero, we then find that, up to subsequences, $\tilde{v}''$ must converge uniformly in $Q''_\rho$ for every $\rho < 1$ to a function $v_0$ that is harmonic in $Q''$ and bounded by $1$.
Thus, since $\tilde{v}''|_{Q_\rho} = \tilde{v}$,
\begin{equation}
\label{eqn: first poly approx}
\|\tilde{v}-P_0\|_{L^\infty(\overline{Q}_\rho)}  \leq \|\tilde{v} - v_0\|_{L^\infty(\overline{Q}_\rho)}  + \|v_0 - P_0\|_{L^\infty(\overline{Q}_\rho)} \leq C\vep + C\rho^{4} \leq \frac{2}{3}\rho^{3+\a}
\end{equation}
if $\vep, \rho > 0$ are chosen sufficiently small.
Here, $P_0$ is the harmonic degree three Taylor polynomial of $v_0$ at the origin.
Furthermore, $\pa_1 P_0(0,\cdot) = \pa_2 P_0(\cdot,0) \equiv 0$ by symmetry.
Hence, $P_0$ has no linear or cubic part, no mixed two degree part, and the remaining two (pure) second degree coefficients of $P_0$ are such that
\begin{equation}
\label{eqn: coeff for harmonic poly approx}
0 = 2(p_0)_{2,1} + 2(p_0)_{2,2}
\end{equation}
Rescaling, we determine that
\[
\|v - P - r^{3+\a}P_0(\cdot/r)\|_{L^\infty(\overline{Q}_{\rho r})} \leq \frac{2}{3}(\rho r)^{3+\a}.
\]

Unfortunately, the polynomial
\[
P(x) + r^{3+\a}P_0(x/r)
\]
is not necessarily approximating for $v$ at zero.
To make it approximating, we want to replace $P_0$ with a polynomial $\bar{P}_0$ whose coefficients satisfy
\begin{equation}
\label{eqn: corrected poly coeff}
\begin{cases}
0 = 2(\bar{p}_0)_{2,1} + 2(\bar{p}_0)_{2,2}\\
0 = 6(\bar{p}_0)_{3,1} + r2(\bar{p}_0)_{2,1}\pa_1 a_1(0) + r2(\bar{p}_0)_{2,2}\pa_1 a_2(0)\\
0 = 6(\bar{p}_0)_{3,2} + r2(\bar{p}_0)_{2,1}\pa_2 a_1(0) + r2(\bar{p}_0)_{2,2}\pa_2 a_2(0).
\end{cases} 
\end{equation}
Subtracting \eqref{eqn: corrected poly coeff} and \eqref{eqn: coeff for harmonic poly approx}, we see that the coefficients for $P_0 - \bar{P}_0$ solve the system \eqref{eqn: corrected poly coeff} with left-hand side
\[
\begin{cases}
d_0 \hspace{-0.25cm}& = 0\\
d_{1,1} \hspace{-0.25cm}&= r2(p_0)_{2,1}\pa_1 a_1(0) + r2(p_0)_{2,2}\pa_1 a_2(0)\\
d_{1,2} \hspace{-0.25cm}&=  r2(p_0)_{2,1}\pa_2 a_1(0) + r2(p_0)_{2,2}\pa_2 a_2(0).
\end{cases}
\]
Thus, after choosing $(\bar{p}_0)_{2,1} = (p_0)_{2,1}$, since $\max_{m,j} |d_{m,j}| \leq Cr\vep$, it follows that $\bar{P}_0$ can be found so that
\[
\|P_0 - \bar{P}_0\|_{L^\infty(Q'')} \leq C\vep.
\]

Finally, replacing $P_0$ with $\bar{P}_0$ in \eqref{eqn: first poly approx}, we obtain the desired conclusion.
Also, observe that
\[
\bar{P}(x) := P(x) + r^{3+\a}\bar{P}_0(x/r)
\]
is such that
\[
\|\bar{P} - P\|_{L^\infty(Q''_r)} \leq Cr^{3+\a},
\]
as desired.
\end{proof}

After multiplying $v$ be a small constant, the hypotheses of Lemma~\ref{lem: iteration} are satisfied with $P \equiv 0$ and $r = r_0$.
Provided $r_0$ is sufficiently small (depending only on $\l,\L$, and $\alpha$), we can iteratively apply Lemma~\ref{lem: iteration} with $r = r_0\rho^k$ to determine the existence of a limiting approximating polynomial $P^0$ for $v$ at zero such that
\[
\|P^0\| \leq C \qquad\text{and}\qquad \|v - P^0\|_{L^\infty(\overline{Q}_r)} \leq Cr^{3+\a} \quad\forall r \leq r_0.
\]

Repeating a similar procedure at every point in $x \in \overline{Q}_{3/4}$, we find approximating polynomials $P^x$ such that the above inequalities holds with the same constant $C > 0$, which implies that $v \in C^{3,\a}(\overline{Q}_{3/4})$, as desired.
\end{proof}

At this point, we might hope to prove higher order Schauder estimates and then bootstrap to show that $u \in C^{\infty}(\overline{Q})$.
Recalling \eqref{eqn: pLt of neumann MA}, the regularity of the coefficients of our equation is limited by the regularity of $\nab u^\star$ up the boundary of $Q$.
However, this strategy falls short at the next stage.
The system of equations governing the existence of an approximating polynomial is degenerate, the normal derivative condition is too restrictive.
A simple manifestation of this is seen by considering
\[
\Delta v = x_1 x_2 \quad\text{in}\quad Q
\qquad\text{and}\qquad 
\pa_1 v(0,x_2) = \pa_2v(x_1,0) \equiv 0.
\]
A solution to this equation cannot be $C^4(\overline{Q})$.
Indeed, if it were, taking $\pa_{12}$, we see that
\[
\pa_{1112}v(0) + \pa_{2221}v(0) = 1.
\]
Yet, from the boundary data, we have that
\[
\pa_{1112}v(0) + \pa_{2221}v(0) = 0.
\]
This is impossible.
An adaptation of this example shows that optimal transport maps from $Q$ to itself may not be $C^3(\overline{Q})$ for generic (smooth) densities.

\begin{theorem}
\label{thm: counter ex to C3 Q}
There exist $f, g: \overline{Q} \to \R$ two smooth densities bounded away from zero and satisfying the mass balance condition $\|f\|_{L^1(Q)} = \|g\|_{L^1(Q)}$ such that the optimal transport $T$ taking $f$ to $g$ is of class $C^{2,\a}(\overline{Q})$ for every $\a < 1$ but not $C^3(\overline{Q})$.
\end{theorem}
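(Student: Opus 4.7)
The plan is to realize, inside the quasilinear equation \eqref{eqn: pLt of neumann MA} for the partial Legendre transform, the obstruction of the Laplacian toy example displayed just before the theorem. For small $\epsilon > 0$, I take
\[
f_\epsilon \equiv 1 + \tfrac{\epsilon}{4} \qquad\text{and}\qquad g_\epsilon(y) := 1 + \epsilon\, y_1 y_2 \quad\text{on }\overline{Q}.
\]
These are smooth, bounded away from zero and infinity, and satisfy the mass balance $\|f_\epsilon\|_{L^1(Q)} = \|g_\epsilon\|_{L^1(Q)} = 1 + \epsilon/4$. Theorem~\ref{thm: C2a up to bdry Q} then gives $T_\epsilon = \nab u_\epsilon \in C^{2,\alpha}(\overline{Q})$ for every $\alpha < 1$. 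I will show $T_\epsilon \notin C^3(\overline{Q})$ by forcing a compatibility failure at the corner $(0,0)$.

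Suppose, for contradiction, that $T_\epsilon \in C^3(\overline{Q})$, i.e., $u_\epsilon \in C^4(\overline{Q})$. By the uniform convexity \eqref{eqn: strong convexity} and the partial-Legendre identities recalled in the proof of Theorem~\ref{thm: C2a up to bdry Q}, together with the implicit function theorem applied to $\pa_1 u_\epsilon(X_1,x_2) = p$, the function $v := u_\epsilon^\star$ also lies in $C^4(\overline{Q})$. With my specific choice of $f_\epsilon$ and $g_\epsilon$, equation \eqref{eqn: pLt of neumann MA} reads
\[
(1 + \tfrac{\epsilon}{4})\, v_{11} + (1 - \epsilon\, p\, v_2)\, v_{22} = 0 \quad\text{in } Q,
\]
with the Neumann conditions $v_1(0, x_2) \equiv 0$ on $\Up_{\rm l}$ and $v_2(p, 0) \equiv 0$ on $\Up_{\rm b}$. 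Differentiating these boundary identities tangentially along the respective sides, up to order three, and evaluating at the origin yields
\[
v_2(0,0) = v_{12}(0,0) = v_{122}(0,0) = v_{1112}(0,0) = v_{1222}(0,0) = 0.
\]

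Now apply $\pa_1 \pa_2$ to the PDE and evaluate at the origin. A direct Leibniz expansion of $\pa_{12}[(1 - \epsilon p\, v_2)\, v_{22}]$ together with $\pa_{12}[(1 + \tfrac{\epsilon}{4})\, v_{11}] = (1 + \tfrac{\epsilon}{4})\, v_{1112}$ produces eight summands in total. Seven of them contain at least one factor among $p$, $v_2$, $v_{12}$, $v_{122}$, $v_{1112}$, or $v_{1222}$, each of which vanishes at the origin by the previous display, while the eighth summand is the single monomial $-\epsilon\, v_{22}^2$ (arising from $\pa_{12}(-\epsilon p v_2)\cdot v_{22}$ after setting $p = v_{12} = 0$). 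Hence the identity at the origin reduces to
\[
-\epsilon\, v_{22}(0,0)^2 = 0.
\]
However, the partial-Legendre formula from the proof of Theorem~\ref{thm: C2a up to bdry Q} gives $v_{22}(0,0) = [(\pa_{12} u_\epsilon)^2/\pa_{11} u_\epsilon - \pa_{22} u_\epsilon](0,0)$; since $\pa_{ii} u_\epsilon(0,0) > 0$ by \eqref{eqn: strong convexity} and $\det D^2 u_\epsilon(0,0) > 0$ forces $(\pa_{12} u_\epsilon)^2(0,0) < \pa_{11} u_\epsilon(0,0)\, \pa_{22} u_\epsilon(0,0)$, we deduce $v_{22}(0,0) < 0$, a contradiction.

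The main obstacle is the careful bookkeeping in the $\pa_{12}$-differentiation of the quasilinear term $(1 - \epsilon p v_2)\, v_{22}$: one must track each summand and check that it carries at least one vanishing factor at the corner, and this uses all four Neumann-derived vanishing identities plus the vanishing of $p$ itself. Conceptually this is simply the quasilinear incarnation of the observation preceding the theorem that $\Delta v = x_1 x_2$ with Neumann-zero data on $\{x_1 = 0\}$ and $\{x_2 = 0\}$ admits no $C^4$ solution: the nonlinear coefficient $g_\epsilon(p, -v_2)$ supplies the nonzero cross-derivative at the corner in place of the forcing term $x_1 x_2$, while the squared quantity $v_{22}(0,0)^2$ plays the role of the constant $1$ on the right-hand side of the identity $\pa_1\pa_2(\Delta v)(0) = 1$.
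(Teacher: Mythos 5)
Your proof is correct and follows essentially the same strategy as the paper's: pass to the partial Legendre transform $v = u^\star$, apply $\pa_1\pa_2$ to the quasilinear equation \eqref{eqn: pLt of neumann MA} at the corner, and use the Neumann-derived vanishing identities to isolate a nonzero term ($-\epsilon\,v_{22}(0)^2$) that yields a contradiction with the uniform convexity of $u$. The only difference is that you place the product factor in the target density ($g_\epsilon = 1 + \epsilon y_1 y_2$) rather than the source (the paper takes $f = 1 + x_1 x_2$, $g = 5/4$, with the Remark noting the $\epsilon$-scaled variant), which perturbs the coefficient $a_2$ instead of $a_1$ and produces the contradiction $v_{22}(0) = 0$ in place of the paper's $v_{11}(0) = 0$ --- the same argument with the roles of source and target swapped.
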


\begin{proof}
Let $f(x) := 1 + x_1x_2$ and $g(y) := 5/4$.
By Theorem~\ref{thm: C2a up to bdry Q}, a convex potential $u$ defining $T$ is of class $C^{3,\a}(\overline{Q})$ for all $\a < 1$.
Now, suppose, to the contrary, that $u \in C^4(\overline{Q})$.
Then, $u^\star \in C^4(\overline{Q})$, and the first equation in \eqref{eqn: pLt of neumann MA} becomes
\[
\pa_{11}u^\star + \frac{5}{4}\pa_{22}u^\star = -x_2\pa_1u^\star\pa_{11}u^\star.
\]
So, differentiating in the $\e_1$-direction and then in the $\e_2$-direction, we see that
\[
\pa_{1112}u^\star + \frac{5}{4}\pa_{2221}u^\star = -((\pa_{11}u^\star)^2 + 2x_2\pa_{11}u^\star\pa_{112}u^\star + \pa_1u^\star\pa_{111}u^\star + x_2\pa_{12}u^\star\pa_{111}u^\star + x_2\pa_1u^\star\pa_{1112}u^\star).
\]
By the boundary conditions in \eqref{eqn: pLt of neumann MA} and recalling that $\pa_1u^\star(0) = 0$, we deduce that
\[
0 = \pa_{11}u^\star(0).
\]
Yet, this is impossible: by \eqref{eqn: strong convexity},
\[
\pa_{11}u^\star(0) = \frac{1}{\pa_{11}u(0)} \geq \frac{1}{C} > 0.
\]
\end{proof}

\begin{remark}
Replacing $f$ and $g$ with $f_\vep(x) := 1 + \vep x_1x_2$ and $g_\vep(y) := 1 + \vep/4$, we obtain the same contradiction as above, but with densities arbitrarily close (in $C^\infty$) to $1$.
\end{remark}

\noindent{\bf Acknowledgments.}
I would like to thank Connor Mooney for some keen observations and suggestions.
Also, I'm grateful to Alessio Figalli for his encouragement and guidance.



\end{document}